\newtheorem{lemma}{\bf Lemma}[section]
\newtheorem{prop}[lemma]{\bf Proposition}
\newtheorem{teor}[lemma]{\bf Theorem}
\newtheorem{cor}[lemma]{\bf Corollary}
\newtheorem{problem}[lemma]{\bf Problem}
\newcommand{\GL}{{\operatorname{GL}}}
\newcommand{\AGL}{{\operatorname{AGL}}}
\DeclareMathOperator{\Aut}{Aut}
\DeclareMathOperator{\soc}{soc}
\DeclareMathOperator{\GammaL}{\mathrm{\Gamma L}}
\DeclareMathOperator{\AGammaL}{\mathrm{A\Gamma L}}
\title[]{On finite groups with the Magnus Property}
\author{Martino Garonzi}
\address{Martino Garonzi. Departamento de Matem\'atica, Universidade de Bras\'ilia, Campus 
Universit\'ario Darcy Ribeiro, Bras\'ilia-DF, 70910-900, Brazil. \newline
ORCID: https://orcid.org/0000-0003-0041-3131}
\email{martino@mat.unb.br, mgaronzi@gmail.com}
\author{Claude Marion} 
\address{Claude Marion. Centro de Matem\'atica, Faculdade de Ci\^{e}ncias, Universidade do Porto, Rua do Campo Alegre 687, 4169-007 Porto, Portugal
\newline
ORCID: https://orcid.org/0000-0002-3802-1750}
\email{claude.marion@fc.up.pt}
\thanks{The first author acknowledges the support of Conselho Nacional 
de Desenvolvimento Cient\'ifico e Tecnol\'ogico (CNPq), Universal
- Grant number 402934/2021-0. 
The second author acknowledges the support from the Centre of Mathematics of the University of
Porto which is financed by national funds through the Funda\c{c}\~{a}o para a Ci\^{e}ncia e a Tecnologia,
I.P., under the project with references UIDB/00144/2020 and UIDP/00144/2020, as well as the Department of Mathematics of the University of Bras\'{i}lia for an invited research visit and its hospitality.}
\date{}
\keywords{Primitive group, Finite group, Solvable group, Magnus Property}
\begin{document}

\setlength{\parskip}{2mm}

\begin{abstract}
We investigate finite groups with the Magnus Property, where a group is said to have the Magnus Property (MP) if whenever two elements have the same normal closure then they are conjugate or inverse conjugate. In particular we observe that a finite MP group is solvable, determine the finite primitive MP groups and determine all the possible orders of the chief factors of a finite MP group. We also determine the MP finite direct products of finite primitive groups, as well as the MP crown-based powers of a finite monolithic primitive group. 
\end{abstract}

\maketitle

\section{Introduction}
Given a group $G$,  elements $x,y\in G$ and  a subset $S\subseteq G$, we let $x^y=y^{-1}xy$, $x^G$ denote the conjugacy class of $x$ in $G$, $C_G(x)$ be the centralizer of $x$ in $G$, and $\langle S\rangle$ and $\langle S\rangle^G$ denote respectively the subgroup of $G$ generated by $S$ and the normal closure of $S$ in $G$  which is the smallest normal subgroup of $G$ containing $S$. Note that $\langle x\rangle^G=\langle x^G\rangle$.

A group $G$ has the Magnus Property (MP) if whenever $x,y \in G$ are such that $\langle x^G \rangle = \langle y^G \rangle$, either $x$ is conjugate to $y$ or $x$ is conjugate to $y^{-1}$. We also say that a group $G$ has the Strong Magnus Property (SMP) if whenever $x,y \in G$ are such that $\langle x^G \rangle = \langle y^G \rangle$, $x$ is conjugate to $y$. In other words, an SMP group is an MP group in which every element is conjugate to its own inverse. Magnus' 
original motivation to study MP groups is, as he showed in \cite{Magnus30}, that free groups 
have this property. This can also be formulated by saying that if $G$ is any $1$-relator group realized as quotient of the free group $F$, say $G \cong F/\langle x \rangle^F$, then any other relator (on the same generators) realizing the same group must be either conjugate or inverse-conjugate to $x$.

We are interested in investigating finite groups which are MP.
As was noted in \cite{KMP}, a finite MP group is abelian if and only if all of its elements have order $1$, $2$, $3$, $4$ or $6$. It follows that every finite abelian MP group has the form $C_2^n \times C_3^m$ or $C_2^n \times C_4^m$, where $n,m$ are non-negative integers. 

Using the fact that a quotient of a finite MP group is again MP (see \cite[Corollary 2.5]{KMP}), our first observation is the following very strong structural property of finite MP groups.

\begin{prop} \label{solvability}
Every finite MP group is solvable.
\end{prop}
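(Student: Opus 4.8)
The plan is to argue by contradiction through a minimal counterexample, exploiting the cited fact that every quotient of a finite MP group is again MP. Let $G$ be a finite MP group of smallest possible order that fails to be solvable. Then every proper quotient $G/M$ (with $M \neq 1$) is MP and of strictly smaller order, hence solvable by minimality of $|G|$. The first step is to use this to control the minimal normal subgroups of $G$. Take any minimal normal subgroup $N$ of $G$. If $N$ were abelian, then $G/N$ would be solvable (being a proper quotient) and $N$ would be solvable, forcing $G$ to be solvable, against our assumption. Hence every minimal normal subgroup of $G$ is non-abelian, so I may fix one of the form $N \cong S^k$ with $S$ a non-abelian finite simple group and $k \geq 1$.

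The second step is to compute normal closures inside $N$. Since $N \trianglelefteq G$, every $G$-conjugate of an element $s \in N$ again lies in $N$, so $\langle s^G \rangle \subseteq N$. If moreover $s \neq 1$, then $\langle s^G \rangle$ is a nontrivial normal subgroup of $G$ contained in $N$, whence $\langle s^G \rangle = N$ by minimality of $N$. Thus every nontrivial element of $N$ has normal closure exactly equal to $N$.

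The third step applies the Magnus Property directly. For any two nontrivial $s, t \in N$ we have $\langle s^G \rangle = N = \langle t^G \rangle$, so MP forces $s$ to be conjugate in $G$ either to $t$ or to $t^{-1}$; in either case $s$ and $t$ have the same order. Consequently all nontrivial elements of $N$ share a single common order. But $N \cong S^k$, and the order of the non-abelian simple group $S$ is divisible by at least two distinct primes $p \neq q$ (indeed by at least three, by Burnside's theorem), so by Cauchy's theorem $N$ contains an element of order $p$ and an element of order $q$, which have different orders. This contradiction shows that no minimal counterexample exists, and hence every finite MP group is solvable.

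I expect the only delicate point to be the reduction carried out in the first step. The Magnus Property is inherited by quotients but \emph{not} by subgroups, so one cannot simply declare a non-abelian composition factor of $G$ to be MP and argue with it. The minimal-counterexample device is exactly what circumvents this: it forces a non-abelian minimal normal subgroup $N$ to surface at the top of $G$, rather than being buried as an internal section, and once such an $N$ is in hand the collapse of all the normal closures inside $N$ onto $N$ itself makes the final contradiction immediate.
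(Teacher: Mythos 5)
Your proof is correct and follows essentially the same route as the paper: both arguments reduce to a nonabelian minimal normal subgroup $N \cong S^k$ of an MP group, observe that every nontrivial element of $N$ has normal closure equal to $N$, and derive a contradiction from the existence of nontrivial elements of distinct orders in $N$. The only cosmetic difference is that you package the reduction as a minimal counterexample, whereas the paper passes directly to a quotient $G/N$ in which a nonabelian chief factor becomes a minimal normal subgroup.
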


Denote by $\mathbb{F}_q$ the finite field of size $q$ and by $\mathbb{F}_q^n$ the $\mathbb{F}_q$-vector space of $n$-tuples of elements of $\mathbb{F}_q$. Recall that $M(9) = \mathbb{F}_3^2 \rtimes Q_8$, with the following identification of the quaternion group $Q_8= \langle i,j \rangle = \{\pm 1,\pm i,\pm j,\pm k\}$ as a subgroup of $\Aut(\mathbb{F}_3^2)=\GL(2,3)$ acting naturally on $\mathbb{F}_3^2$:
$$i \mapsto \left( \begin{array}{cc} 0 & 2 \\ 1 & 0 \end{array} \right), \hspace{1cm} 
j \mapsto \left( \begin{array}{cc} 1 & 1 \\ 1 & 2 \end{array} \right).$$
We also use the following standard notation: given some positive integers $m,n$ and a prime power $q$, $C_m$ denotes the cyclic group of order $m$, $S_m$, $A_m$ denote respectively the symmetric and alternating group of degree $m$, and $\AGL(n,q)$ and $\AGammaL(n,q)$ denote respectively the affine general and semilinear groups $\mathbb{F}_q^n \rtimes \GL(n,q)$ and $\mathbb{F}_q^n \rtimes \GammaL(n,q)$ where $\GammaL(n,q) \cong \GL(n,q) \rtimes \Aut(\mathbb{F}_q)$ is the group of invertible semilinear transformations of $\mathbb{F}_q^n$. When we write $C_7 \rtimes C_3$, we mean that the semidirect product is taken with respect to the nontrivial action.

Recall that the Frattini subgroup of a group $G$ is the intersection of all the maximal subgroups of $G$ if $G$ has maximal subgroups, else is defined to be $G$, and it is denoted by $\Phi(G)$. For us, a finite primitive group is a finite group $G$ admitting a maximal subgroup $M$ such that $\bigcap_{g \in G} g^{-1}Mg=\{1\}$. Our first main theorem is:

\begin{teor} \label{main1}
If $G$ is a nontrivial finite (S)MP group, then $G/\Phi(G)$ is isomorphic to a subdirect product of finite primitive (S)MP groups. The finite primitive MP groups are $C_2$, $C_3$, $S_3$, $A_4$, $\AGL(1,5)$, $C_7 \rtimes C_3$, $\AGL(1,7)$, $M(9)=C_3^2 \rtimes Q_8$. The finite primitive SMP groups are $C_2$, $S_3$ and $M(9)$.
\end{teor}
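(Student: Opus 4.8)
The plan is to prove the subdirect-product assertion by a Frattini argument and then to reduce the classification of primitive (S)MP groups to the classification of solvable transitive linear groups, which I sieve by two necessary conditions. For the first assertion, let $M_1,\dots,M_k$ be the maximal subgroups of the nontrivial finite (S)MP group $G$ and let $N_i=\bigcap_{g\in G}g^{-1}M_ig$ be the core of $M_i$. Each quotient $G/N_i$ is by definition a finite primitive group, and it is (S)MP because the class of (S)MP groups is closed under quotients (\cite[Corollary 2.5]{KMP} for MP, and conjugacy-to-inverse passes to quotients for SMP). Since $\Phi(G)$ is a normal subgroup contained in every $M_i$ it lies in every $N_i$, while conversely $\bigcap_i N_i\subseteq\bigcap_i M_i=\Phi(G)$; hence $\Phi(G)=\bigcap_i N_i$ is exactly the kernel of the diagonal map $G\to\prod_i G/N_i$. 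This exhibits $G/\Phi(G)$ as a subdirect product of the finite primitive (S)MP groups $G/N_i$.

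It then remains to determine the finite primitive (S)MP groups. By Proposition \ref{solvability} such a $G$ is solvable, and a solvable primitive group is of affine type: it has a unique minimal normal subgroup $V\cong C_p^d$, self-centralizing and complemented by a core-free maximal subgroup $H$, so that $G=V\rtimes H$ with $H$ acting faithfully and irreducibly on $V=\mathbb{F}_p^d$. I extract two necessary conditions. First, for $0\ne x\in V$ the normal closure $\langle x^G\rangle$ is a nonzero normal subgroup of $G$ inside $V$, hence equals $V$ by minimality, and a direct computation shows the $G$-class of $x$ is its $H$-orbit; so the MP condition applied to pairs of nonzero vectors forces $\langle H,-I\rangle$ to be transitive on $V\setminus\{0\}$, i.e. $\langle H,-I\rangle$ is a solvable transitive linear group. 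Second, since $H\cong G/V$ is a quotient, $H$ is itself (S)MP, so its abelianization is one of the groups $C_2^a\times C_3^b$ or $C_2^a\times C_4^b$ allowed by the abelian classification of \cite{KMP}.

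I then invoke the classification of solvable transitive linear groups (Huppert, Hering): the candidates for $\langle H,-I\rangle$ are the one-dimensional semilinear groups inside $\GammaL(1,q)$ together with finitely many explicitly known exceptional configurations, the insoluble $\SL(2,5)$-type cases being already excluded by solvability. For the semilinear family the key computation is that an element $g=(v,h)$ with nontrivial multiplier $h$ satisfies $\langle g^G\rangle=V\rtimes\langle h\rangle^H$ (one shows $V\le\langle g^G\rangle$ because the $H$-span of the image of $h-1$ is a nonzero, hence full, submodule), and that two such elements are $G$-conjugate only if their multipliers are $H$-conjugate. Feeding this into the MP (respectively SMP) condition forces every cyclic section of the multiplier group to have at most two (respectively one) generators, i.e. all element orders lie in $\{1,2,3,4,6\}$, which together with the transitivity of $\langle H,-I\rangle=\mathbb{F}_q^*\langle\phi\rangle$ bounds $q$ and kills the Frobenius component; tracking which $(p,d,H)$ survive yields exactly $C_2,C_3,S_3,A_4=\AGL(1,4),\AGL(1,5),C_7\rtimes C_3$ and $\AGL(1,7)$. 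For the finitely many exceptional cases I check MP directly (the groups being small), finding that only the action of $Q_8$ on $\mathbb{F}_3^2$ survives, giving $M(9)$, while $\mathbb{F}_3^2\rtimes\SL(2,3)$ and the remaining candidates fail. Finally, the SMP list is read off by keeping from the MP list exactly those groups in which every element is conjugate to its inverse; a short check (the two classes of $3$-cycles in $A_4$, the order-$4$ and order-$6$ elements of $\AGL(1,5)$ and $\AGL(1,7)$, the order-$3$ multipliers of $C_7\rtimes C_3$, and the generator of $C_3$ all fail) leaves only $C_2,S_3$ and $M(9)$.

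The main obstacle will be the semilinear family: unlike the finitely many exceptional configurations, $\GammaL(1,q)$ supplies transitive subgroups for every prime power $q$, so excluding them requires genuinely controlling conjugacy classes and normal closures in affine semilinear groups and showing that the full MP constraints — not merely the necessary transitivity and abelianization conditions — bound $q$ and eliminate every nontrivial field-automorphism component. Establishing sufficiency, namely that each of the eight surviving groups really is MP, is then a finite but not wholly routine sequence of normal-closure computations.
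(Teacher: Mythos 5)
Your proposal is correct in outline and follows the same overall architecture as the paper: the identical Frattini/core argument for the subdirect-product statement, reduction to affine primitive groups $V\rtimes H$, the observation that minimality of $V$ forces all nonzero vectors into at most two mutually inverse $H$-orbits, a bound on $q$ in the one-dimensional semilinear case, and a finite (computational) check of the survivors. The one genuinely different ingredient is how you dispose of the non-semilinear possibilities. The paper splits into two cases: if $G$ is $2$-transitive it invokes the Hering--Huppert list of affine $2$-transitive groups (via \cite[Table 7.3]{Cameron}), and if $G$ has rank $3$ it invokes the Foulser--Kallaher and Liebeck classifications of rank-$3$ affine primitive groups, using the equality of the two nontrivial orbit sizes to eliminate the imprimitive and exceptional configurations. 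You instead adjoin the central involution $-I$ and observe that $H\langle -I\rangle$ is a \emph{solvable transitive} linear group, so that Huppert's older classification of solvable transitive linear groups applies directly; this folds the rank-$3$ case into the $2$-transitive one and avoids the rank-$3$ classification altogether, at the cost of needing to recover $H$ as an index-$\leqslant 2$ subgroup afterwards. That is a legitimate and arguably cleaner reduction, reaching the same dichotomy ($H\leqslant\GammaL(1,q)$ or $q$ in a short explicit list) as the paper.

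One step in your semilinear sieve is stated too strongly. From the fact that all generators of a cyclic group with a fixed normal closure must be conjugate or inverse-conjugate you may conclude $\varphi(m)\leqslant 2$ only for cyclic \emph{quotients} of $H$ (where conjugation is invisible); for the normal cyclic subgroup $C=H\cap\mathbb{F}_q^{\ast}$ of order $m$, the generators fall into conjugacy classes of size up to $\ell=|H/C|\leqslant 6$, so the correct conclusion is $\varphi(m)\leqslant 2\ell\leqslant 12$, hence $m\leqslant 42$ and $q\leqslant 1+2m\ell\leqslant 505$ --- which is exactly the bound the paper uses before its GAP inspection. Your claim that all element orders of the multiplier group lie in $\{1,2,3,4,6\}$ does not follow from the argument given (MP groups such as $C_7\rtimes C_3$ have elements of order $7$), and taken literally it would discard candidates for the wrong reason; the weaker bound above is what actually closes the argument, and it still reduces everything to a finite check. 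With that correction your route goes through and lands on the same eight MP groups and three SMP groups.
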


Actually, we prove the following stronger result.
\begin{teor} \label{main1bis}
Let $G$ be a finite primitive group of the form $V \rtimes G_0$ with $V$ elementary abelian and $G_0$ acting irreducibly on $V$. Assume $G_0$ is MP and any two elements of $V$ are either conjugate or inverse-conjugate in $G$. Then either $G$ is MP, in which case $G$ is one of the groups listed in Theorem \ref{main1}, or $G$ is isomorphic to one of the following groups: $\AGammaL(1,4) \cong S_4$, $\AGammaL(1,8)$, $\AGammaL(1,9)$, $\AGammaL(1,16)$. 
\end{teor}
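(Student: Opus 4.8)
The plan is to exploit that $V=\soc(G)$ is the unique minimal normal subgroup of $G$ and is self-centralizing; this is forced by primitivity of affine type, since $G_0$ acts faithfully and irreducibly on $V$. Write $\pi\colon G\to G/V\cong G_0$ for the quotient map and $\overline{g}=\pi(g)$. The first step is a normal-closure formula: for every $g\neq 1$ one has $\langle g^G\rangle\supseteq V$, as every nontrivial normal subgroup contains the unique minimal normal subgroup, whence $\langle g^G\rangle=\pi^{-1}(\langle \overline{g}^{\,G_0}\rangle)$. Consequently $\langle g^G\rangle=\langle h^G\rangle$ if and only if $\langle \overline{g}^{\,G_0}\rangle=\langle \overline{h}^{\,G_0}\rangle$. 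This splits the verification of MP for $G$ into three pieces. If $\overline{g}=\overline{h}=1$ then $g,h\in V$ and the hypothesis on $V$ gives $g\sim h$ or $g\sim h^{-1}$ directly; if exactly one of $\overline{g},\overline{h}$ is trivial the common normal closure would have to equal $V$ yet contain an element outside $V$, which is impossible; so the only remaining case is $g,h\notin V$, where, since $G_0$ is MP, equality of normal closures already yields $\overline{g}\sim_{G_0}\overline{h}^{\pm1}$. Thus MP for $G$ reduces to a lifting question: whenever $\overline{g}$ and $\overline{h}^{\pm1}$ are $G_0$-conjugate, must $g$ and $h^{\pm1}$ be $G$-conjugate?

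Next I would make this lifting question explicit. Writing elements of $G$ as pairs $(v,s)$ with $v\in V$, $s\in G_0$, a direct computation gives $(u,r)(v,s)(u,r)^{-1}=((1-rsr^{-1})u+rv,\,rsr^{-1})$. From this one reads off the standard fact that the $G$-conjugacy classes of elements projecting into the $G_0$-class of a fixed $s$ are in bijection with the orbits of $C_{G_0}(s)$ on the coinvariant space $V_s:=V/(1-s)V$, and that inversion matches the classes above $[s]$ with those above $[s^{-1}]$. Hence $G$ is MP precisely when, for every $1\neq s\in G_0$, the $G$-classes lying above $[s]\cup[s^{-1}]$ form a single orbit under inversion. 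The decisive observation is that $V_s=0$ exactly when $s$ has no nonzero fixed vector on $V$: when $V_s=0$ there is a unique class above $[s]$ and nothing can go wrong, whereas as soon as some $s\neq1$ fixes a nonzero vector the orbits $\{0\}$ and a nonzero $C_{G_0}(s)$-orbit on $V_s$ produce distinct elements $(0,s)=s$ and $(v,s)$ with $\overline{v}\neq0$, which share the normal closure $\pi^{-1}(\langle s^{G_0}\rangle)$ but, since the zero and nonzero coinvariant classes are interchanged by inversion only with classes of the same type over $[s^{-1}]$, constitute two distinct inverse-orbits and so are neither conjugate nor inverse-conjugate. In short, under the hypotheses $G$ is MP if and only if $G_0$ acts fixed-point-freely on $V$, i.e. $G$ is a Frobenius group.

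It then remains to classify the admissible pairs $(V,G_0)$. The hypothesis that any two elements of $V$ are conjugate or inverse-conjugate translates, via the $s=1$ case of the conjugation formula, into $V\setminus\{0\}=G_0v\cup(-G_0v)$ for every $v\neq0$; that is, $G_0$ has at most two orbits on the nonzero vectors, interchanged by $-1$. Since $G_0$ is solvable by Proposition \ref{solvability}, I would invoke the classification of solvable (near-)transitive linear groups, which confines $G_0$ to subgroups of $\GammaL(1,q)$ together with a short list of small exceptional modules. Imposing that $G_0$ be MP, a strong restriction on element orders and hence on $q$, and running through this finite list, the fixed-point-free cases are exactly the Frobenius groups $C_2,C_3,S_3,A_4,\AGL(1,5),C_7\rtimes C_3,\AGL(1,7),M(9)$ of Theorem \ref{main1}, while the cases in which $G_0$ fails to act fixed-point-freely survive only when $G_0=\GammaL(1,q)$ with $q\in\{4,8,9,16\}$, giving $G\cong\AGammaL(1,q)\cong S_4,\AGammaL(1,8),\AGammaL(1,9),\AGammaL(1,16)$; for each, a Galois element $s$ has its fixed subfield as a nonzero fixed space, and the mechanism above exhibits the MP failure.

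The main obstacle I anticipate is this classification step: combining the near-transitivity forced by the $V$-hypothesis with the MP condition on $G_0$ to show that the only non-Frobenius survivors are the four semilinear groups, and in particular pinning down exactly which $\GammaL(1,q)$ are themselves MP. The reductions in the first two paragraphs are essentially formal, but translating the transitive-linear-group classification into the present MP setting and verifying, module by module, both the positive Frobenius cases and the four exceptional failures is where the real work lies.
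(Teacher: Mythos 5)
Your first two paragraphs are correct and in fact sharper than what the paper does. The normal-closure formula $\langle g^G\rangle=\pi^{-1}(\langle\overline{g}^{\,G_0}\rangle)$ for $g\neq 1$, the parametrization of the $G$-classes above $[s]$ by the $C_{G_0}(s)$-orbits on $V/(1-s)V$, and the observation that inversion preserves the distinction between the zero and the nonzero coinvariant classes together yield a clean criterion: under the stated hypotheses, $G$ is MP if and only if no $1\neq s\in G_0$ fixes a nonzero vector of $V$. The paper never isolates this criterion; its proof only reduces the primitivity degree to a finite range and then decides MP by a GAP check for each candidate, so your reduction genuinely replaces a computation by a transparent structural condition (and explains a posteriori the paper's remark that every finite primitive MP group other than $C_2$ and $C_3$ is Frobenius).

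The gap is the third paragraph, which is where the paper's actual work lies and which you only sketch. Two things are missing. First, ``$G_0$ has at most two orbits on $V\setminus\{0\}$, interchanged by $-1$'' does not by itself confine $(V,G_0)$ to a finite list: after invoking the classification of affine $2$-transitive groups (\cite[Table 7.3]{Cameron}) and the rank-$3$ classifications of Foulser--Kallaher and Liebeck (where the equality of the sizes of the two nontrivial orbits, forced by negation being a bijection between them, eliminates all but a degree-$47^2$ and an imprimitive degree-$9$ configuration), one is still left with the entire infinite family $G_0\leqslant\GammaL(1,q)$ with $q$ unbounded. The essential step, absent from your proposal, is to use MP of $G_0$ to bound $q$: with $H=G_0\cap\mathbb{F}_q^{\ast}$ cyclic normal of order $m$ and $G_0/H$ cyclic and MP, hence of order $\ell\leqslant 6$, all $\varphi(m)$ generators of $H$ must lie in $t^{G_0}\cup(t^{-1})^{G_0}$, a set of size at most $2\ell$, so $\varphi(m)\leqslant 12$, $m\leqslant 42$ and $q\leqslant 1+2m\ell\leqslant 505$; only then is there a finite list to run through. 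Second, the identification of the non-Frobenius survivors as exactly $\AGammaL(1,4)$, $\AGammaL(1,8)$, $\AGammaL(1,9)$, $\AGammaL(1,16)$ is asserted rather than derived: even granting your criterion, one must still verify, for each admissible pair in the resulting finite list, whether $G_0$ is MP, whether it has at most two orbits on $V\setminus\{0\}$ swapped by $-1$, and whether it is fixed-point-free. You correctly flag this classification as the anticipated obstacle, but as written the proposal establishes the reduction and not the classification, so it does not yet prove the theorem.
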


Note that if $G$ if a finite primitive group satifying the hypotheses of Theorem \ref{main1bis} then $G_0\leqslant \GammaL(1,|V|)$. The two lists in Theorem \ref{main1} are the lists of all finite primitive MP and SMP groups, respectively. We point out that Theorem \ref{main1} has a couple of easy consequences: the first on the Fitting height of a finite MP group, the second on the possible prime divisors of the order of a finite MP group. 
The minimal length of a normal series of a finite solvable group $G$ with nilpotent factors is called the Fitting height of $G$ and it is denoted by $h(G)$.

\begin{cor} \label{Fitting}
If $G$ is a finite MP group, then $h(G) \leqslant 2$.
\end{cor}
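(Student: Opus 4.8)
The plan is to bootstrap from Theorem \ref{main1} and reduce the problem to computing the Fitting height of the finite primitive MP groups one by one. The first step is to pass to the Frattini quotient. Since the Frattini subgroup $\Phi(G)$ of a finite group is nilpotent, it satisfies $\Phi(G) \leqslant F(G)$, where $F(G)$ denotes the Fitting subgroup, and one has the standard identity $F(G)/\Phi(G) = F(G/\Phi(G))$. I would use this to show $h(G) = h(G/\Phi(G))$: indeed, $G/F(G) \cong (G/\Phi(G))/F(G/\Phi(G))$, so if $h(G/\Phi(G)) \leqslant 2$ then $G/F(G)$ is nilpotent, and the series $1 \leqslant F(G) \leqslant G$ exhibits $h(G) \leqslant 2$. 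Thus it suffices to bound $h(G/\Phi(G))$.

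The second step invokes Theorem \ref{main1}: $G/\Phi(G)$ is isomorphic to a subdirect product of finite primitive MP groups, hence embeds as a subgroup of a direct product $H_1 \times \cdots \times H_n$ where each $H_i$ belongs to the list $C_2, C_3, S_3, A_4, \AGL(1,5), C_7 \rtimes C_3, \AGL(1,7), M(9)$. I would then use two elementary properties of the Fitting height of finite solvable groups: it is monotone under passing to subgroups, and $h(H_1 \times \cdots \times H_n) = \max_i h(H_i)$. Together these give $h(G/\Phi(G)) \leqslant \max_i h(H_i)$, reducing everything to the eight groups on the list.

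The third step is the direct verification that each primitive MP group has Fitting height at most $2$. The cyclic groups $C_2$ and $C_3$ are nilpotent, so $h = 1$. For each remaining group I would identify the Fitting subgroup as the unique minimal normal subgroup $V$ and observe that the point stabiliser $G_0$ is nilpotent, so that $1 \leqslant V \leqslant G$ is a normal series with nilpotent factors: concretely $F(S_3)=C_3$, $F(A_4)=C_2^2$, $F(\AGL(1,5))=C_5$, $F(C_7\rtimes C_3)=F(\AGL(1,7))=C_7$, and $F(M(9))=C_3^2$ with nilpotent quotients $C_2$, $C_3$, $C_4$, $C_3$, $C_6$, $Q_8$ respectively, each giving $h = 2$. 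Combining with the first two steps yields $h(G) \leqslant 2$.

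The only point requiring care, rather than being purely routine, is the reduction $h(G)=h(G/\Phi(G))$ in the first step, which hinges on the Frattini–Fitting identity $F(G)/\Phi(G)=F(G/\Phi(G))$; the computation of the individual Fitting heights in the last step is straightforward once the Fitting subgroup is correctly identified (in particular one must note that in $M(9)$ the central involution $-I \in Q_8$ acts by inversion on $C_3^2$, so no nilpotent normal subgroup properly contains $C_3^2$, forcing $F(M(9))=C_3^2$).
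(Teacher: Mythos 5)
Your proposal is correct and follows essentially the same route as the paper: reduce to $G/\Phi(G)$ via the identity $F(G/\Phi(G))=F(G)/\Phi(G)$, embed the Frattini quotient into a direct product of primitive MP groups using Theorem \ref{main1}, apply monotonicity of $h$ under subgroups together with $h(A\times B)=\max\{h(A),h(B)\}$, and check that each primitive MP group has Fitting height at most $2$. Your write-up merely makes explicit the routine verifications (the Fitting subgroups of the eight groups) that the paper leaves implicit.
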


\begin{cor} \label{consequences}
Let $G$ be a nontrivial finite MP group and let $\Pi$ be the set of prime divisors of $|G|$. The following facts are true.
\begin{enumerate}
\item $\Pi \subseteq \{2,3,5,7\}$.
\item If $G$ is nilpotent, then $\Pi \subseteq \{2,3\}$.
\item If $G$ is nilpotent and SMP, then $\Pi = \{2\}$, i.e. $G$ is a $2$-group.
\end{enumerate}
\end{cor}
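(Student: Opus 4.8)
The plan is to handle the three parts separately, using Theorem \ref{main1} for (1) and reducing (2) and (3) to statements about $p$-groups via the Frattini quotient.

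For part (1), I would first record the orders of the eight finite primitive MP groups listed in Theorem \ref{main1}: the primes dividing $|C_2|$, $|C_3|$, $|S_3|$, $|A_4|$, $|\AGL(1,5)|$, $|C_7\rtimes C_3|$, $|\AGL(1,7)|$, $|M(9)|$ are exactly $2,3,5,7$. By Theorem \ref{main1}, $G/\Phi(G)$ embeds as a subdirect product in a direct product of such groups, so every prime dividing $|G/\Phi(G)|$ lies in $\{2,3,5,7\}$. It then remains to pass from $G/\Phi(G)$ back to $G$, for which I would use the standard fact that a prime divides $|G|$ if and only if it divides $|G/\Phi(G)|$; this gives $\Pi\subseteq\{2,3,5,7\}$.

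For parts (2) and (3), I would exploit that a nilpotent finite group is the direct product of its Sylow subgroups, each of which is both a subgroup and a quotient of $G$ and hence inherits the (S)MP property: quotients of MP groups are MP by \cite[Corollary 2.5]{KMP}, and the property that every element is conjugate to its inverse visibly passes to quotients. Thus it suffices to show that there is no nontrivial MP $p$-group for $p\geqslant 5$, and no nontrivial SMP $p$-group for $p\geqslant 3$. Given a nontrivial (S)MP $p$-group $P$, the quotient $P/\Phi(P)$ is a nontrivial elementary abelian $p$-group which is again (S)MP. Invoking the characterization that a finite abelian MP group has all element orders in $\{1,2,3,4,6\}$, the presence of an element of order $p$ forces $p\in\{2,3\}$; and in the SMP case every element must equal its own inverse, forcing exponent dividing $2$ and hence $p=2$. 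This yields $\Pi\subseteq\{2,3\}$ in (2) and, since $G$ is nontrivial, $\Pi=\{2\}$ in (3).

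The only genuinely non-formal point is the Frattini fact used in (1). I would justify it as follows: if some prime $p$ divided $|G|$ but not $|G/\Phi(G)|$, then a Sylow $p$-subgroup $P$ of $G$ would lie in $\Phi(G)$; since $\Phi(G)$ is nilpotent, $P$ would be characteristic in $\Phi(G)$ and hence normal in $G$, while $|G:P|$ would be coprime to $p$, so Schur--Zassenhaus would furnish a complement $H$ with $G=PH\leqslant \Phi(G)H$. Because the elements of $\Phi(G)$ are non-generators, $G=\Phi(G)H$ would give $G=H$ and thus $P=1$, a contradiction. Everything else is bookkeeping with the orders appearing in Theorem \ref{main1} and the elementary abelian reduction, so I expect no serious obstacle beyond correctly checking that the SMP property descends to the Frattini quotient of each Sylow subgroup.
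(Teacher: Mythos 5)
Your argument is correct, and part (1) is essentially the paper's proof: Theorem \ref{main1} controls the primes dividing $|G/\Phi(G)|$, and the passage back to $|G|$ is exactly the Schur--Zassenhaus/Frattini observation the paper sketches (you route the normality of the Sylow subgroup of $\Phi(G)$ through nilpotency of $\Phi(G)$ rather than the Frattini argument, which is an immaterial difference). For parts (2) and (3), however, you take a genuinely different path. The paper stays inside the framework of Theorem \ref{main1}: it uses that $G$ is nilpotent if and only if $G/\Phi(G)$ is, observes that the only nilpotent groups among the primitive (S)MP groups are $C_2$ and $C_3$ (respectively $C_2$), and concludes via the subdirect product decomposition. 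You instead decompose the nilpotent group $G$ as the direct product of its Sylow subgroups, note each is a quotient of $G$ and hence (S)MP by Lemma \ref{quoMP}, and then kill the Sylow $p$-subgroups for bad $p$ by passing to the elementary abelian Frattini quotient and invoking the characterization of abelian MP groups from \cite{KMP} (for SMP, the observation that conjugacy is trivial in an abelian group forces exponent $2$). Your route for (2)--(3) is more elementary and self-contained in that it does not need the classification of primitive MP groups at all -- one could even replace the appeal to \cite{KMP} by the one-line remark that a cyclic group of prime order $p$ is MP only if $\varphi(p)\leqslant 2$; the paper's route is shorter given that Theorem \ref{main1} has already been established and keeps all three items under one roof. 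Both are complete proofs.
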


A finite direct product of finite groups is SMP if and only if all the factors are (see Proposition \ref{SMPclass}). In the case of MP groups, the situation is more complicated. However we determine all the (S)MP finite direct products of finite primitive groups.
\begin{prop} \label{MPdir}
Let $n>1$ be an integer. For $1\leqslant i \leqslant n$ let $G_i$ be a finite primitive group and let $G:=G_1\times\ldots \times G_n$. The following assertions hold. 
\begin{enumerate}
\item If one of the $G_i$ is not MP then $G$ is not MP.
\item Suppose that $G_i$ is MP for each $i$. Then $G$ is MP if and only if, whenever $G_i\in\{C_7 \rtimes C_3, \mathrm{AGL}(1,5)\}$ for some $i \in \{1,\ldots,n\}$, then, for all $j \in \{1,\ldots,n\}$ with $j\neq i$, either $G_i=G_j=\mathrm{AGL}(1,5)$, or $G_j$ is SMP (i.e. $G_j\in\{C_2,S_3,M(9)\}$).
\end{enumerate} 
\end{prop}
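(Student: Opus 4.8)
The plan is to reduce the problem to an arithmetic condition governing how normal closures interact in a direct product, and then to settle it by a short computation inside each of the eight primitive MP groups. Part (1) is immediate: each projection $G\to G_i$ is onto, so $G_i$ is a quotient of $G$, and since quotients of finite MP groups are MP (\cite[Corollary 2.5]{KMP}), MP of $G$ would force MP of every $G_i$; the contrapositive is (1). For part (2), assume all $G_i$ are MP. First I would prove, for $x=(x_1,\dots,x_n)$, the formula
\[
\langle x^G\rangle=\langle x\rangle\cdot\prod_{i=1}^n [N_i,G_i],\qquad N_i:=\langle x_i^{G_i}\rangle,
\]
by writing $x^g=(x_i^{g_i})_i=x\cdot([x_i,g_i])_i$ for "$\subseteq$" and noting that $(1,\dots,[x_i,g_i],\dots,1)\in\langle x^G\rangle$ generates $\prod_i[N_i,G_i]$ for "$\supseteq$"; here $[N_i,G_i]=\langle[x_i,g]:g\in G_i\rangle\trianglelefteq G_i$ and $N_i/[N_i,G_i]$ is cyclic, generated by the (central) image of $x_i$. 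The decisive feature is that $[N_i,G_i]$ depends only on $N_i$, hence is unchanged when $x_i$ is replaced by a conjugate of $x_i^{\pm1}$.

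Next I would exploit this. If $\langle x^G\rangle=\langle y^G\rangle$, projecting to $G_i$ gives $\langle x_i^{G_i}\rangle=\langle y_i^{G_i}\rangle$, so MP of $G_i$ makes $x_i$ conjugate to $y_i$ or to $y_i^{-1}$; after one global conjugation I may take $y_i=x_i^{\varepsilon_i}$ with $\varepsilon_i\in\{\pm1\}$. Passing to $\overline G=G/\prod_i[N_i,G_i]=\prod_i G_i/[N_i,G_i]$, the formula shows $\langle x^G\rangle=\langle y^G\rangle$ iff $\langle\overline x\rangle=\langle\overline y\rangle$, where $\overline{x}_i$ generates a cyclic group of order $c_i:=|N_i:[N_i,G_i]|$. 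Moreover such a pair violates MP iff $x\not\sim y$ and $x\not\sim y^{-1}$, which unwinds to: some coordinate has $\varepsilon_i=-1$ with $x_i$ non-real (not conjugate to its inverse), and some coordinate has $\varepsilon_j=+1$ with $x_j$ non-real. Since $\langle\overline x\rangle=\langle\overline y\rangle$ means $t\equiv\varepsilon_i\pmod{c_i}$ for some $t$ coprime to $\operatorname{lcm}_i c_i$, I can take all but the two witnessing coordinates trivial; the congruences $t\equiv-1\pmod{c_u}$ and $t\equiv1\pmod{c_w}$ (with $u=x_i$, $w=x_j$) are then solvable, and $t$ is automatically coprime to $\operatorname{lcm}(c_u,c_w)$, exactly when $\gcd(c_u,c_w)\mid2$. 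This yields the clean criterion: $G$ is MP iff no two distinct indices $i,j$ admit non-real $u\in G_i$, $w\in G_j$ with $\gcd(c_u,c_w)\le2$.

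It then remains to compute, in each primitive MP group $H$, the values $c_v=|\langle v^{H}\rangle:[\langle v^H\rangle,H]|$ over non-real $v$. I expect: $C_2,S_3,M(9)$ have no non-real elements (being SMP); $C_3$ and $A_4$ have them only with $c=3$; $\AGL(1,7)$ only with $c\in\{3,6\}$; $\AGL(1,5)$ only with $c=4$ (its order-$4$ elements, non-real because the multiplier is a conjugacy invariant distinguishing such an element from its inverse); and $C_7\rtimes C_3$ with $c\in\{1,3\}$, the value $c=1$ coming from its order-$7$ elements. As all non-real $c$-values for $C_3,A_4,\AGL(1,7)$ are divisible by $3$, any two of them have $\gcd\ge3$ and never form a bad pair; a factor $C_7\rtimes C_3$ has $c=1$, which pairs badly with any non-real element, forcing all other factors to be SMP; and a factor $\AGL(1,5)$, whose only non-real value is $4$, pairs badly with $w$ precisely when $4\nmid c_w$, i.e. with anything other than another $\AGL(1,5)$ or an SMP factor. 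Reading "no bad pair" through these facts gives exactly condition (2).

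The hard part will be this last step: verifying reality and computing $c_v$ for the relevant elements across all eight groups, and especially pinning down the two exceptional values — $c=1$ for the order-$7$ elements of $C_7\rtimes C_3$ (so that $\overline x$ ignores that coordinate while non-reality survives) and $c=4$ for the order-$4$ elements of $\AGL(1,5)$ (coprime to $3$ but sharing the factor $2$ with $6$). These few values are precisely what single these two groups out, and once they are established the proposition follows by the criterion above.
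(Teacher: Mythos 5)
Your argument is correct, but it follows a genuinely different route from the paper's. The paper proves item (2) by combining Theorem \ref{main1} with Proposition \ref{powersdpMP} (a Frobenius kernel/complement argument: if each $N_i-\{1\}$ is a single class, the complements act fixed-point-freely and $\prod_i H_i$ is MP, then $\prod_i(N_i\rtimes H_i)$ is MP), Proposition \ref{SMPclass} for the SMP factors, the non-MP quotient $C_3\times C_4$ for the negative cases, and a GAP check for $(C_7\rtimes C_3)\times G_2$. You instead derive the exact formula $\langle x\rangle^G=\langle x\rangle\prod_i[N_i,G_i]$ with $N_i=\langle x_i\rangle^{G_i}$ (your sketch is sound: for $\supseteq$ one uses that $\langle x^G\rangle$ is normal and contains every $x^{-1}x^{g}$, hence contains $\langle [x_i,g]:g\in G_i\rangle^{G_i}=[N_i,G_i]$; for $\subseteq$ one checks $\langle x\rangle\prod_i[N_i,G_i]$ is a normal subgroup containing $x$), reduce the MP question to a Chinese-remainder solvability condition on the invariants $c_v=|\langle v\rangle^{G_i}:[\langle v\rangle^{G_i},G_i]|$ attached to non-real elements, and correctly observe that a witness can always be compressed to two coordinates, giving the clean criterion ``$G$ fails MP iff some $i\neq j$ carry non-real $u,w$ with $\gcd(c_u,c_w)\mid 2$.'' This is in the spirit of the Klopsch--Kuckuck direct-product criterion (reference \cite{KK} in the bibliography, which the paper does not actually invoke here). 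I checked your table of $c$-values and reality statements for the eight primitive MP groups and they are all correct (in particular $c=1$ for the order-$7$ classes of $C_7\rtimes C_3$ and $c=4$ for the order-$4$ classes of $\AGL(1,5)$ are the decisive values), and reading the criterion through the table reproduces exactly condition (2). What your approach buys is uniformity: it is computer-free, handles all factor combinations at once (including $C_3$, for which the hypothesis ``$N_i-\{1\}$ is a conjugacy class'' of Proposition \ref{powersdpMP} does not literally hold), and would decide MP for any direct product of known MP groups; what the paper's approach buys is that Proposition \ref{powersdpMP} is reused for the crown-based powers in Proposition \ref{MPcrownbased}, where your coordinate-wise formula does not apply directly. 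To make your write-up complete you only need to replace ``I expect'' by the short explicit verifications of reality and of $c_v$ in each of the eight groups.
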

Recall that a minimal normal subgroup of a group $G$ is a nontrivial normal subgroup of $G$ that does not properly contain any nontrivial normal subgroup of $G$, and the socle $\mathrm{soc}(G)$ of $G$ is the subgroup of $G$ generated by its minimal normal subgroups. 
A group is monolithic if it has a unique minimal normal subgroup.  Given a finite monolithic primitive group $L$ with socle $V$ and a positive integer $k$, the corresponding crown-based power of $L$ is
$$L_k=\{(l_1,\dots,l_k)\in L^k: l_1 \equiv \ldots \equiv l_k \mod V\}.$$
We fully characterize the (S)MP crowned-based powers of a finite monolithic primitive group:
\begin{prop}\label{MPcrownbased}
Let $L$ be a finite monolithic primitive group and let $k> 0$ be an integer. If $L \cong M(9)$, then $L_k$ is MP if and only if $k=1$, whereas if $L \not \cong M(9)$, then $L_k$ is (S)MP if and only if $L$ is (S)MP.
\end{prop}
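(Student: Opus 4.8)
The plan is to first make the structure of $L_k$ completely explicit. Writing $V=\soc(L)$ and $G_0=L/V$, the projection $L_k\to L$ onto any one coordinate is surjective with kernel isomorphic to $V^{k-1}$, so $L$ is a quotient of $L_k$. Since quotients of (S)MP groups are (S)MP (for MP this is \cite[Corollary 2.5]{KMP}, and ``every element is conjugate to its inverse'' also passes to quotients), this gives at once the ``only if'' direction of both statements: if $L_k$ is (S)MP then so is $L$, and in particular $L$ must be one of the groups in Theorem \ref{main1}. From now on I may assume $L$ is (S)MP, hence solvable with $V$ elementary abelian and $G_0$ acting irreducibly; a direct computation then identifies $L_k\cong V^k\rtimes G_0$ with $G_0$ acting diagonally (the same way on each of the $k$ copies of $V$).

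The engine of the argument is a description of conjugacy classes and normal closures in $V^k\rtimes G_0$. Set $D=\operatorname{End}_{G_0}(V)$, a finite field; note $G_0$ centralizes $D$, so $G_0\leqslant\operatorname{GL}_D(V)$, and $G_0$ is abelian precisely when $\dim_D V=1$, in which case $G_0\leqslant D^\ast$. For a socle element $\mathbf v\in V^k$ (i.e.\ $g=1$) the conjugacy class is the diagonal $G_0$-orbit and the normal closure is the $G_0$-submodule it generates, which equals the $D$-line $D\mathbf v$ exactly when $\dim_D V=1$. For $x=(\mathbf v,g)$ with $g\neq 1$, using that $G_0$ is abelian and that $g-1$ acts invertibly on $V^k$, I would show the class of $x$ is all of $V^k\times\{g\}$ and its normal closure is $V^k\rtimes\langle g\rangle$, depending only on $\langle g\rangle$. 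With these formulas, comparing two elements with the same normal closure splits into three cases (both in the socle, both outside it, or one of each); the mixed case never occurs because the two normal closures then lie on opposite sides of $V^k=\ker(L_k\to G_0)$.

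The crucial point is that among the primitive MP groups of Theorem \ref{main1}, only $M(9)$ has $\dim_D V>1$ (there $\dim_{\mathbb{F}_3}V=2=\dim_D V$, since $Q_8$ acts absolutely irreducibly), while all others have $\dim_D V=1$. When $\dim_D V=1$ I would verify that each MP requirement for $L_k$ collapses to one already known for $L$: for socle elements the condition ``$D\mathbf v=D\mathbf v'\Rightarrow \mathbf v'\in(G_0\cup -G_0)\mathbf v$'' is exactly the socle condition $\mathbb{F}_q^\ast=G_0\cup(-G_0)$ forced by $L$ being MP; for elements with $g\neq 1$ the condition ``$\langle g\rangle=\langle g'\rangle\Rightarrow g'\in\{g,g^{-1}\}$'' says every cyclic subgroup of $G_0$ has at most two generators, i.e.\ $G_0$ has elements of order only in $\{1,2,3,4,6\}$, which holds as $G_0$ is abelian MP. The SMP refinements are checked the same way (using that $L$ SMP forces $-1\in G_0$ and $G_0$ of exponent at most $2$). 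This yields the ``if'' direction for $L\not\cong M(9)$.

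It remains to treat $L\cong M(9)$ with $k\geqslant 2$, which is where $\dim_D V=2$ bites and where I expect the only genuine subtlety. Here a single socle vector can generate a copy of $V^2$: choosing $\mathbf v,\mathbf v'$ whose first two coordinates are $D$-linearly independent and whose remaining coordinates vanish, both generate the submodule $V^2\oplus 0$ and so have the same normal closure. But $Q_8$ acts regularly on $V\setminus\{0\}$, so the stabilizer of such a vector is trivial and its orbit has size $8$, while the generators of $V^2\oplus 0$ number $48$ and split into $6$ orbits, far more than the two ($O$ and $-O$) that MP would allow. Exhibiting one explicit pair (e.g.\ $((1,0),(0,1),0,\dots)$ and $((1,0),(1,1),0,\dots)$) and checking by hand that they are neither conjugate nor inverse-conjugate shows $M(9)_k$ is not MP for $k\geqslant 2$, completing the characterization. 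The main obstacle is precisely this bookkeeping of submodules and orbits in $V^k$: everything hinges on the elementary but decisive fact that the submodule generated by a single socle vector is no larger than one $D$-line exactly when $\dim_D V=1$, and this is what separates $M(9)$ from every other primitive MP group.
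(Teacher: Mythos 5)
Your proposal is correct, and its skeleton coincides with the paper's: the ``only if'' directions come from the coordinate projection $L_k \twoheadrightarrow L$ and closure of (S)MP under quotients; elements outside the socle are handled via the coset structure of $V^k \rtimes H$ together with the (S)MP property of $H \cong L/V$; and socle elements are handled by identifying the normal closure of $\mathbf{v} \in V^k$ with the $\mathbb{F}_p[G_0]$-submodule it generates. Where you genuinely diverge is in the execution. The paper splits the socle analysis into two ad hoc pieces: for the metacyclic groups it observes that every subgroup of $V^k$ is normal in $L_k$ and compares generators of cyclic $p$-groups, while for $A_4$ it runs a separate computation with a matrix $u$ satisfying $u^2+u=1$, i.e.\ it builds $\mathbb{F}_4$ by hand. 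Your lemma that the normal closure of a socle vector is the line $D\mathbf{v}$ over $D=\operatorname{End}_{G_0}(V)$, and that the MP condition on socle elements is exactly $D^{\ast}=G_0\cup(-G_0)$, which is already forced by $L=L_1$ being MP, absorbs both cases at once and makes transparent why $\dim_D V=1$ is the dividing line separating $M(9)$ from the rest. Second, for $L\cong M(9)$ the paper merely asserts that $L_2$ is not MP (in effect a computer check), whereas your count of the $48$ generating pairs of $V^2\oplus 0$ against the $6$ orbits of the freely acting $Q_8$, with the explicit pair that is neither conjugate nor inverse-conjugate, is a self-contained proof and is worth recording. Two points to tighten in a write-up: the invertibility of $g-1$ on $V$ for $1\neq g\in G_0$, which you invoke to show that the classes outside the socle are the cosets of $V^k$, is automatic once $\dim_D V=1$ (then $g-1$ is a nonzero element of the field $D$), so no case-by-case Frobenius verification is needed; and you should justify that $D$ coincides with the image of $\mathbb{F}_p[G_0]$ in $\operatorname{End}(V)$ (Schur's lemma for a finite abelian group acting faithfully and irreducibly), since that is what guarantees the generated submodule is all of $D\mathbf{v}$ rather than the span over a proper subring.
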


Our remaining results are essentially on chief factors of a finite MP group. 
A chief factor of a group $G$ is a minimal normal subgroup of a quotient of $G$. 
Given a prime $p$, the $p$-rank of a finite group $G$ is the maximal dimension of a chief $p$-factor of $G$ and is denoted $r_p(G)$. If $G$ has no $p$-chief factor then $r_p(G)$ is set to be 0.
Given a finite solvable group $G$ and a prime $p$ dividing $|G|$, there is, by Hall's theorem, a largest positive integer $S_p(G)$ such that $G$ has a  maximal subgroup of index $p^{S_p(G)}$. If $p$ does not divide $|G|$ then $S_p(G)$ is set to be $0$.

Given a finite solvable group $G$ and a prime $p$ one has $S_p(G) = S_p(G/\Phi(G))$
and $S_p(G) \leqslant r_p(G)$ (see Lemma \ref{srineq}), 
however $r_p(G)$ may not equal $r_p(G/\Phi(G))$ and $S_p(G)$ may not equal $r_p(G)$. Let $j_p(G)$ be the largest integer $j$ such that $p^j$ appears as an index in a maximal chain of subgroups of $G$. Huppert \cite[Satz 1]{Huppert} proved that $r_p(G)=j_p(G)$. In case $G$ is a finite MP group, we  establish that  $r_p(G)=r_p(G/\Phi(G))=S_p(G)\leqslant 2$. 

\begin{teor} \label{thmprank}
Suppose that $G$ is a finite MP group. Let $p$ be a prime dividing $|G|$, so that $p \in \{2,3,5,7\}$. The following assertions hold. 
\begin{enumerate}
\item If $p\not \in \{2,3\}$ then $S_p(G)=r_p(G)=1$. Moreover, $G$ is supersolvable if and only if $G/\Phi(G)$ has no quotient isomorphic to $A_4$ nor $M(9)$.
\item If $p\in \{2,3\}$ then $S_p(G)\leqslant 2$, and $S_p(G)=1$ if and only if $G$ has no quotient isomorphic to $A_4$ if $p=2$, $M(9)$ if $p=3$.
\item If $p\in \{2,3\}$ then $r_p(G)\leqslant 2$.
\item We have $r_p(G)=r_p(G/\Phi(G))$.
\item We have $r_p(G)=S_p(G)$.
\end{enumerate}
\end{teor}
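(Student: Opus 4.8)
The plan is to trap $r_p(G)$ and $S_p(G)$ between a single chain of inequalities and then to force its two ends to coincide. First I would record
$$S_p(G)=S_p(G/\Phi(G))\leqslant r_p(G/\Phi(G))\leqslant r_p(G)=j_p(G),$$
which uses that $S_p$ is Frattini-invariant, Lemma \ref{srineq}, the fact that the chief factors of $G/\Phi(G)$ occur among those of $G$, and Huppert's equality $r_p=j_p$ \cite[Satz 1]{Huppert}. I would pair this with the dictionary between $p$-power indices and primitive quotients: a maximal subgroup $M\leqslant G$ of index $p^{d}$ produces, through its core, a faithful primitive quotient of $G$ of degree $p^{d}$, which is MP by \cite[Corollary 2.5]{KMP} and hence occurs in the list of Theorem \ref{main1}; conversely a quotient isomorphic to one of those primitive groups pulls the index of a core-free maximal subgroup back to a maximal subgroup of $G$ of the same index. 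Since the prime-power degrees occurring in Theorem \ref{main1} are exactly $2,3,4,5,7,9$, the only $p$-power degrees are $p$ (various) and $p^{2}$ (namely $A_4$ for $p=2$ and $M(9)$ for $p=3$). This already yields part (2) and the $S_p$-halves of part (1): because $p\mid|G|$, a Hall $p'$-subgroup lies in a maximal subgroup of $p$-power index, so $S_p(G)\geqslant 1$; there is no MP primitive group of degree $p^{3}$, so $S_p(G)\leqslant 2$; and $S_p(G)=2$ holds precisely when $G$ has a quotient isomorphic to $A_4$ (if $p=2$) or $M(9)$ (if $p=3$), while $S_5(G)=S_7(G)=1$.

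The key structural input for the $r_p$-side is a nilpotency lemma: for every chief factor $V$ the group $A:=G/C_G(V)$ is nilpotent of order prime to $p$. Indeed, in a solvable group the Fitting subgroup $F(G)$ is the intersection of the centralizers of the factors of a chief series, so $F(G)\leqslant C_G(V)$; as $h(G)\leqslant 2$ by Corollary \ref{Fitting}, the quotient $G/F(G)$ is nilpotent, and hence so is its quotient $A$. Moreover $A$ acts faithfully and irreducibly on $V$, and a faithful irreducible nilpotent subgroup of $\GL(V)$ contains no nontrivial normal $p$-subgroup (such a subgroup would fix a nonzero vector, hence all of $V$), whence $p\nmid|A|$. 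Now a chief factor $V$ is a minimal normal subgroup of some quotient $\bar G$ of $G$; $\bar G$ is MP, and as $V$ is the normal closure of each of its nonzero elements, the MP property forces all nonzero elements of $V$ to be conjugate or inverse-conjugate in $\bar G$. Thus $V\rtimes A$ satisfies the hypotheses of Theorem \ref{main1bis} with $A$ MP, irreducible, and now nilpotent; reading off the list, $A\in\{1,C_2,C_3,C_4,C_6,Q_8,\GammaL(1,9)\}$, which rules out the non-nilpotent cases $S_4=\AGammaL(1,4)$, $\AGammaL(1,8)$, $\AGammaL(1,16)$. Hence $\dim V=1$ for $p\in\{5,7\}$ and $\dim V\leqslant 2$ for $p\in\{2,3\}$, with $\dim V=2$ only for the $A_4$-module ($p=2$) or the $M(9)$- and $\AGammaL(1,9)$-modules ($p=3$). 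This gives parts (1) and (3) for $r_p$.

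By the inequality chain, parts (4) and (5) both reduce to the single reverse inequality $r_p(G)\leqslant r_p(G/\Phi(G))$, i.e. to realizing a chief factor of maximal dimension as a complemented one (a complement to an irreducible chief factor is automatically maximal, so a complemented factor of dimension $d$ gives a maximal subgroup of index $p^{d}$). When $r_p(G)=1$ this is immediate. When $r_p(G)=2$ I would fix a dimension-$2$ chief factor and, passing to a quotient, assume it is a minimal normal subgroup $V$; by the nilpotency lemma $V\leqslant O_p(G)$ and, as $|A|$ is prime to $p$, $O_p(G)\leqslant C_G(V)$, so $V$ is central in $O_p(G)$ and a Hall $p'$-subgroup $L$ of $G$, which induces $A$ on $V$, acts coprimely on $O_p(G)$. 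If $V$ is complemented we are done and the induced primitive quotient is $A_4$ or $M(9)$, which also supplies the quotient demanded in part (2). The case $V\leqslant\Phi(G)$ is the heart of the matter, and the step I expect to be the main obstacle: here $V$ is non-complemented and a priori invisible in $G/\Phi(G)$, and one must in particular exclude the $\AGammaL(1,9)$-module, which survives both earlier filters (its acting group $\GammaL(1,9)$ is nilpotent and the module is even transitive, so the two-orbit condition holds). The mechanism I would use is that $V$ is the unique faithful irreducible $\mathbb{F}_pA$-module and that the coprime-action principle $[O_p(G),L]\leqslant\Phi(O_p(G))\Rightarrow[O_p(G),L]=1$ forces $L$ to act faithfully on the semisimple Frattini quotient $O_p(G)/\Phi(O_p(G))$; faithfulness of the nilpotent $p'$-group then forces the isomorphism type $V$ to occur as a composition factor there, i.e. a complemented chief factor of type $V$. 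This yields $r_p(G/\Phi(G))=2$, hence $r_p(G)=r_p(G/\Phi(G))=S_p(G)$, and since a complemented $\AGammaL(1,9)$-module would produce a non-MP quotient $\AGammaL(1,9)$, that module cannot occur at all. Turning this outline into a proof—choosing the right normal $p$-subgroup, controlling the passage between $V\cap\Phi(O_p(G))$ and $O_p(G)/\Phi(O_p(G))$, and treating nonabelian $O_p(G)$—is the technical crux, carried out with the same kind of conjugacy-class and normal-closure bookkeeping used for Theorem \ref{main1bis}.

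Finally, for the supersolvability assertion in part (1): $G$ is supersolvable if and only if all its chief factors are cyclic, that is $r_q(G)=1$ for every prime $q$. Since supersolvability is a saturated formation, this holds if and only if $G/\Phi(G)$ is supersolvable, i.e. has no chief factor of dimension $2$; by the dimension analysis above (and $r_q=r_q(G/\Phi(G))$ from parts (4)–(5)), this is equivalent to $G/\Phi(G)$ having no quotient isomorphic to $A_4$ or $M(9)$.
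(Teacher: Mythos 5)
Your handling of $S_p$ (the inequality chain, the dictionary between $p$-power indices of maximal subgroups and primitive quotients, and the resulting items (1)--(2) for $S_p$) coincides with the paper's. For item (3) and the $r_p$ half of item (1) you take a genuinely different route: the paper proves $r_p(G)\leqslant 2$ by induction on $|G|$, using a Frattini-centralization lemma (Lemma \ref{frattinicf}) and a conjugacy-class count inside a putative minimal normal subgroup of order $p^s$, $s>2$; you instead observe that $F(G)$ centralizes every chief factor, so by Corollary \ref{Fitting} the acting group $A=G/C_G(V)$ is nilpotent of order prime to $p$, and then feed $V\rtimes A$ into Theorem \ref{main1bis} (exactly the construction the paper uses later for Corollary \ref{c:orderchief}). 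This is correct, avoids the induction entirely, and the nilpotency filter cleanly removes $S_4$, $\AGammaL(1,8)$ and $\AGammaL(1,16)$ from the list; it is arguably a cleaner proof of item (3) than the paper's.

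The genuine gap is in your treatment of items (4) and (5). The reduction to ``a $2$-dimensional chief factor of maximal dimension can be realized above $\Phi(G)$'' is right, but the mechanism you propose does not deliver it. Working in $W=O_p(G)/\Phi(O_p(G))$, the coprime-action argument does force a $2$-dimensional $G$-composition factor of $W$ (if all composition factors of $W$ were $1$-dimensional, $L/C_L(O_p(G))$ would be abelian of exponent dividing $p-1$, contradicting its surjection onto $A\in\{C_3,Q_8,QD_{16}\}$). But a chief factor of $G$ lying between $\Phi(O_p(G))$ and $O_p(G)$ is not thereby complemented in $G$, nor visible in $G/\Phi(G)$: the containment $\Phi(O_p(G))\leqslant \Phi(G)\cap O_p(G)$ can be strict, so your $2$-dimensional factor may still sit inside $\Phi(G)$, and replacing $\Phi(O_p(G))$ by $\Phi(G)\cap O_p(G)$ destroys the coprime-action principle you rely on ($L$ can act trivially on $O_p(G)/(O_p(G)\cap\Phi(G))$ without acting trivially on $O_p(G)$). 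So the phrase ``i.e.\ a complemented chief factor of type $V$'' is precisely the unproved step, and you correctly flag it as the crux.

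What closes this gap in the paper is Kohler's theorem \cite[Theorem 3.2(1)]{K}: $S_p(G)=1$ implies $r_p(G)=1$. This is already packaged in the ``Moreover'' clause of Lemma \ref{srineq}, which you cite for the easy inequality but never deploy: since $r_p(G)=1\Leftrightarrow r_p(G/\Phi(G))=1$ and, by item (3) and the fact that $p$ divides $|G/\Phi(G)|$, both quantities lie in $\{1,2\}$, item (4) is immediate; item (5) then follows because $r_p(G/\Phi(G))=2$ forces a quotient isomorphic to $A_4$ or $M(9)$ (the only primitive MP groups with a $2$-dimensional chief factor), whose core-free maximal subgroup pulls back to give $S_p(G)=2$. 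As a side remark, excluding the $\AGammaL(1,9)$-module is not actually needed for any item of the theorem: a $3$-chief factor of that type is still $2$-dimensional, consistent with $r_3\leqslant 2$, and the $M(9)$-quotient required for $S_3(G)=2$ is supplied by the Frattini-quotient argument above, not by the hidden factor itself.
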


In the proof of Theorem \ref{thmprank} we use the concept of a pseudovariety of finite groups which is simply a class of finite groups closed under taking subgroups, quotients and finite direct products.

We finally obtain the following result characterizing the order of a chief factor of a finite (S)MP group:

\begin{cor}\label{c:orderchief}
If $G$ is an MP group then the order of any chief factor of $G$ is one of the following: $2$, $3$, $4$, $5$, $7$, $9$. Moreover, if $G$ is SMP then the order of any chief factor of $G$ is one of the following: $2$, $3$, $4$, $9$.
\end{cor}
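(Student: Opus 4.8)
The plan is to read off both lists from the restriction on the primes dividing $|G|$ together with the $p$-rank bounds, since the order of a chief factor is exactly $p^{d}$ with $d\leqslant r_p(G)$. Indeed, as $G$ is MP it is solvable by Proposition \ref{solvability}, so every chief factor $V$ of $G$ is elementary abelian, say $|V|=p^{d}$; here $V$ is a chief $p$-factor, so $d\leqslant r_p(G)$ by the very definition of the $p$-rank.

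For the MP statement I would invoke Corollary \ref{consequences}(1) to get $p\in\{2,3,5,7\}$, and then split on $p$. For $p\in\{5,7\}$, Theorem \ref{thmprank}(1) gives $r_p(G)=1$, so $d=1$ and $|V|=p\in\{5,7\}$. For $p\in\{2,3\}$, Theorem \ref{thmprank}(3) gives $r_p(G)\leqslant 2$, so $d\in\{1,2\}$ and $|V|\in\{2,4\}$ when $p=2$, while $|V|\in\{3,9\}$ when $p=3$. Collecting these possibilities shows $|V|\in\{2,3,4,5,7,9\}$, as claimed.

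For the SMP statement it suffices to exclude the primes $5$ and $7$ from $|G|$, after which the same rank bounds leave only $|V|\in\{2,3,4,9\}$. By Theorem \ref{main1}, $G/\Phi(G)$ is a subdirect product of copies of the primitive SMP groups $C_2,S_3,M(9)$, so the only primes dividing $|G/\Phi(G)|$ are $2$ and $3$. To transfer this to $G$ itself I would use the standard fact that $G$ and $G/\Phi(G)$ have the same prime divisors: if some prime $p$ divided $|G|$ but not $|G/\Phi(G)|$, then a Sylow $p$-subgroup $P$ of $G$ would lie in $\Phi(G)$; since $\Phi(G)$ is nilpotent, $P$ is characteristic in $\Phi(G)$ and hence normal in $G$, so by Schur--Zassenhaus $G=P\rtimes H$ for some complement $H$, and choosing a maximal subgroup $M\geqslant H$ we have $P\leqslant\Phi(G)\leqslant M$, whence $G=PH\leqslant M$, a contradiction. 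Hence $5,7\nmid|G|$ and, combined with the rank bounds, every chief factor has order in $\{2,3,4,9\}$.

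The only real subtlety is that a chief factor may be contained in $\Phi(G)$, in which case it is not visible as the socle of a primitive quotient, so one cannot read its order directly off the classification of Theorem \ref{main1}; this is precisely why the SMP case is routed through prime divisors and the rank bounds rather than through the primitive SMP socles $\{2,3,9\}$, which would wrongly miss the order $4$. Everything else is bookkeeping.
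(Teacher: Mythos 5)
Your proof is correct, and it takes a genuinely different route from the paper's. The paper forms, for each chief factor $M$, the primitive group $U = M \rtimes G/C_G(M)$, observes that $U$ satisfies the hypotheses of Theorem \ref{main1bis} (its point stabilizer is MP and any two nontrivial elements of $M$ are conjugate or inverse-conjugate in $U$, even though $U$ need not be a quotient of $G$), reads off $|M|$ from the resulting short list of possible $U$'s, and then invokes Theorem \ref{thmprank} only to exclude the socle orders $8$ and $16$ coming from $\AGammaL(1,8)$ and $\AGammaL(1,16)$. You instead bypass Theorem \ref{main1bis} entirely: solvability makes every chief factor elementary abelian of order $p^{d}$ with $d\leqslant r_p(G)$ by definition of the $p$-rank, Corollary \ref{consequences}(1) confines $p$ to $\{2,3,5,7\}$, and Theorem \ref{thmprank}(1),(3) bounds $d$; the SMP case is handled by the same Schur--Zassenhaus/Frattini argument the paper itself uses in proving Corollary \ref{consequences}. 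Your version is more economical; the paper's version carries extra structural information (it identifies the induced primitive group acting on each chief factor), which is why it is phrased through Theorem \ref{main1bis}. One small caveat on your closing remark: your instinct that restricting to the primitive SMP socles would ``wrongly miss the order $4$'' is actually backwards. An SMP group has no quotient isomorphic to $A_4$ (quotients of SMP groups are SMP and $A_4$ is not in the primitive SMP list), so Theorem \ref{thmprank}(2) and (5) give $S_2(G)=r_2(G)=1$, and order $4$ in fact cannot occur as a chief factor of an SMP group. This does not affect your proof, which only claims the stated containment in $\{2,3,4,9\}$, but the list could be sharpened to $\{2,3,9\}$ in the SMP case.
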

Theorem \ref{main1} shows that, for each value $q$ in the lists appearing in Corollary \ref{c:orderchief}, there exists a finite (primitive) (S)MP group with a chief factor of order $q$.

We end this introduction with the following open problem:

\begin{problem}
Is it true that finite (S)MP groups have bounded derived length?
\end{problem}

The layout of the article is a follows. In Section \ref{generalproperties} we give some general properties of finite MP groups and show in particular that they are solvable, establishing Proposition \ref{solvability}. In Section \ref{primitiveMPgroups} we characterize the finite MP primitive groups and prove Theorem \ref{main1}. In Section \ref{dirprod} we characterize the MP finite direct products of finite primitive groups and the MP crown-based powers of a finite monolithic primitive group, proving Propositions \ref{MPdir} and \ref{MPcrownbased}.  Finally in Section \ref{sectionprank} we investigate some properties of the chief factors of a finite MP group and prove Theorem \ref{thmprank} and Corollary \ref{c:orderchief}.

\section{General properties of MP and SMP groups} \label{generalproperties}

A direct way to check whether a given finite group $G$ is MP is to run over all pairs $(x,y)$ of conjugacy class representatives of elements of $G$ and check whether it can happen that $\langle x^G \rangle = \langle y^G \rangle$ without being true that $x,y$ are either conjugate or inverse-conjugate. A much quicker way of doing this is the following. Set
$$\begin{array}{l}
A(G) := \{g^G \cup (g^{-1})^G\ :\ g \in G\}, \\
B(G) := \{\langle g^G \rangle\ :\ g \in G\}.
\end{array}$$
Then $G$ is MP if and only if $|A(G)|=|B(G)|$. This is algorithmically much quicker to implement since its complexity is roughly $k(G)$ (the number of conjugacy classes of $G$) while running over all pairs $(x,y)$ of conjugacy class representatives has complexity roughly $k(G)^2$.

The following result (and its proof) says that checking whether a given finite group is MP or SMP is easy to do using its character table.
\begin{prop} \label{chMP}
The Magnus and Strong Magnus Properties of a finite group $G$ can be detected from the character table of $G$.
\end{prop}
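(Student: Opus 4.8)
The plan is to show that every ingredient entering the Magnus test is recoverable from the character table alone. Recall that $G$ is MP precisely when, for any two conjugacy classes $C=x^G$ and $D=y^G$ with $\langle x^G\rangle=\langle y^G\rangle$, one has $C=D$ or $C=(y^{-1})^G$; and $G$ is SMP precisely when $G$ is MP and, in addition, every element is conjugate to its inverse. The latter equivalence is itself easy: applying the SMP condition to the always-valid equality $\langle x^G\rangle=\langle (x^{-1})^G\rangle$ forces $x$ to be conjugate to $x^{-1}$, so all classes are real; conversely MP together with realness of all classes upgrades ``conjugate or inverse-conjugate'' to ``conjugate''. Thus it suffices to extract from the character table two pieces of data: (i) the involution sending each class to its inverse class, and (ii) for each class its normal closure, expressed as a union of classes, so that the equalities $\langle x^G\rangle=\langle y^G\rangle$ can be tested combinatorially.

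For (i), the inverse class of $x^G$ is the unique class $C'$ satisfying $\chi(C')=\overline{\chi(x)}$ for every $\chi\in\Irr(G)$, since $\chi(x^{-1})=\overline{\chi(x)}$ and the columns of the character table are pairwise distinct. Hence complex conjugation of columns determines the map $C\mapsto C^{-1}$ and, in particular, identifies the real classes. For (ii), recall that the kernel $\ker\chi=\{g\in G:\chi(g)=\chi(1)\}$ of an irreducible character is a union of conjugacy classes read directly off the table. The key identity I would use is
$$\langle x^G\rangle=\bigcap\{\ker\chi:\chi\in\Irr(G),\ \chi(x)=\chi(1)\},$$
which exhibits the normal closure of $x$ as an explicit union of classes.

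To justify this identity, write $K$ for the right-hand side. Every normal subgroup $N$ with $x\in N$ equals the intersection of the irreducible kernels containing it, and each such kernel contains $x$, hence contributes to $K$; therefore $K\subseteq N$, and intersecting over all such $N$ gives $K\subseteq\langle x^G\rangle$. Conversely $K$, being an intersection of kernels, is normal and contains $x$, so $\langle x^G\rangle\subseteq K$; thus equality holds. With (i) and (ii) available, the MP test becomes purely combinatorial on the set of conjugacy classes: compute each normal closure as a union of classes, group the classes by their normal closures, and verify that within each group any two classes are equal or mutually inverse. The group is SMP exactly when this test passes and every class is real. The only nonroutine ingredient is the identity in (ii); once it is in place, no information beyond the character table is required, and I expect its verification to be the sole point deserving care.
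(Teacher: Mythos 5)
Your proof is correct and follows essentially the same route as the paper: you identify the normal closure $\langle x^G\rangle$ with the intersection of the kernels of the irreducible characters containing $x$ (using that every normal subgroup is an intersection of such kernels), and you detect conjugacy and inverse-conjugacy via equality, respectively complex conjugation, of character values. The paper's proof is a condensed version of exactly this argument, so no further comparison is needed.
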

\begin{proof}
Let $\mathrm{Irr}(G)$ be the set of irreducible complex characters of $G$ and, for $x \in G$, let $N_x$ be the intersection of the normal subgroups $\ker(\chi)$, $\chi \in \mathrm{Irr}(G)$, containing $x$. Since every normal subgroup of $G$ is an intersection of kernels of irreducible complex characters of $G$, it follows that the normal closure $\langle x \rangle^G$ of $\langle x \rangle$ is precisely equal to $N_x$. Since two elements are conjugate if and only if their irreducible character values are equal and they are inverse-conjugate if and only if their irreducible character values are complex conjugate, the claim follows.
\end{proof}

As an application of Proposition \ref{chMP}, one can check that the semidirect product $C_7 \rtimes C_3$ (with nontrivial action) is MP (but not SMP). This also implies that the Magnus Property does not pass to subgroups, since $C_7$ is not MP. Similarly, the Strong Magnus Property is not closed under taking subgroups as $S_3$ is SMP but $C_3$ is not. However, in the case of finite groups, the Magnus and Strong Magnus Properties pass to quotients:

\begin{lemma} \label{quoMP}
If $G$ is a finite (S)MP group and $N \unlhd G$, then $G/N$ is (S)MP.
\end{lemma}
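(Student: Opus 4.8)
The plan is to reduce the statement to the defining property of $G$ by lifting elements of $G/N$ to $G$ and matching their normal closures exactly. Write $\pi\colon G\to G/N$ for the quotient map. First I would record the elementary fact that, for any $g\in G$, the normal closure of $\pi(g)$ in $G/N$ equals $\langle g\rangle^G N/N$; hence, if $\bar x,\bar y\in G/N$ satisfy $\langle\bar x\rangle^{G/N}=\langle\bar y\rangle^{G/N}$, then every pair of preimages $x$ of $\bar x$ and $y$ of $\bar y$ satisfies $\langle x\rangle^G N=\langle y\rangle^G N$. If I can produce one preimage $x$ of $\bar x$ and one preimage $y$ of $\bar y$ with $\langle x\rangle^G=\langle y\rangle^G$ (exact equality in $G$, not merely modulo $N$), then the (S)MP hypothesis on $G$ forces $x$ and $y$ to be conjugate, or, in the MP case, conjugate or inverse-conjugate; applying $\pi$ then yields the conclusion in $G/N$. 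The SMP case additionally uses that $\pi$ carries the relation $g\sim g^{-1}$ down to $G/N$, so every element of $G/N$ remains conjugate to its inverse.

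The main obstacle is precisely that the hypothesis only controls the normal closures modulo $N$: different preimages of $\bar x$ produce different subgroups $\langle x\rangle^G$, and naively adjusting a preimage to force $\langle x\rangle^G\unlhd\langle y\rangle^G$ may destroy the reverse inclusion. To overcome this I would work with the finite families $\mathcal K=\{\langle x\rangle^G:\pi(x)=\bar x\}$ and $\mathcal L=\{\langle y\rangle^G:\pi(y)=\bar y\}$ of normal subgroups of $G$ and prove two symmetric domination facts: for every $L\in\mathcal L$ there is $K\in\mathcal K$ with $K\leqslant L$, and for every $K\in\mathcal K$ there is $L\in\mathcal L$ with $L\leqslant K$. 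Each follows from the translated hypothesis: given a preimage $y$ of $\bar y$, any fixed preimage $x_0$ of $\bar x$ lies in $\langle x_0\rangle^G\subseteq\langle x_0\rangle^G N=\langle y\rangle^G N$, so $x_0=wn$ with $w\in\langle y\rangle^G$ and $n\in N$; then $w=x_0n^{-1}\in x_0 N$ is again a preimage of $\bar x$, and since the normal subgroup $\langle y\rangle^G$ contains $w$ we get $\langle w\rangle^G\leqslant\langle y\rangle^G$, as required.

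With the two domination facts in hand, I would finish by a minimality argument in the finite poset $\mathcal K$: choose $K=\langle x\rangle^G\in\mathcal K$ minimal, pick $L\in\mathcal L$ with $L\leqslant K$, and then pick $K''\in\mathcal K$ with $K''\leqslant L$; minimality of $K$ gives $K''=K$, whence $K\leqslant L\leqslant K$ and so $K=L$. This produces a preimage of $\bar x$ and a preimage of $\bar y$ with equal normal closure in $G$, which is exactly what is needed to invoke the (S)MP property of $G$ and conclude. I expect the only delicate point to be the bookkeeping ensuring that the preimages realizing $K$ and $L$ genuinely remain in the cosets $\bar x$ and $\bar y$ throughout the adjustment; the remaining steps are routine manipulations of normal closures.
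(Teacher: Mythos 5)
Your argument is correct. The paper itself does not prove this lemma directly; it simply cites \cite[Corollary 2.5]{KMP} (and \cite[Proposition 2.4]{KMP} for the SMP variant), so your proposal supplies a self-contained proof where the paper outsources to the literature. The key difficulty you identify is the right one: the hypothesis only gives $\langle x\rangle^G N=\langle y\rangle^G N$ for preimages, and one must upgrade this to an exact equality of normal closures for \emph{some} choice of preimages. Your two domination claims are both sound --- writing a fixed preimage $x_0$ as $wn$ with $w\in\langle y\rangle^G$ and $n\in N$ does produce a preimage $w$ of $\bar x$ lying inside the normal subgroup $\langle y\rangle^G$, whence $\langle w\rangle^G\leqslant\langle y\rangle^G$ --- and the closing step (take $K$ minimal in the finite poset $\mathcal K$, find $L\leqslant K$ and then $K''\leqslant L\leqslant K$, so $K''=K=L$) is a clean way to finish. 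Note that finiteness of $G$ is used exactly and only at this last step, to guarantee a minimal element of $\mathcal K$; the argument would go through verbatim for any group with the minimal condition on normal subgroups, which is consistent with the fact that the Magnus property does not pass to quotients of arbitrary infinite groups. The SMP case needs nothing beyond projecting the conjugacy $x\sim y$ in $G$ down to $G/N$, so your closing remark about inverses is harmless but not actually required there.
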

\begin{proof}
This follows from \cite[Corollary 2.5]{KMP} as well as \cite[Proposition 2.4]{KMP} and its proof when considering the SMP version of the lemma.
\end{proof}

We can now prove Proposition \ref{solvability}, i.e. the fact that every finite MP group is solvable. 

\begin{proof}[Proof of Proposition \ref{solvability}]
Assume $G$ is finite and MP. If $G$ is nonsolvable, then there is some $N \unlhd G$ such that $G/N$ has a minimal normal subgroup $L/N$ isomorphic to $S^m$ for some nonabelian simple group $S$. There exist two nontrivial elements in $L/N$ of distinct orders, and by minimality of $L/N$, their normal closure in $G/N$ is $L/N$, contradicting the fact that $G/N$ is MP (by Lemma \ref{quoMP}).
\end{proof}

Not every finite MP group is supersolvable, for example $M(9)$ is MP and not supersolvable. As we will see in Theorem \ref{thmprank}, $M(9)$ and $A_4$ are ``essentially'' the unique non-supersolvable finite MP groups, in the sense that any finite MP group that is not supersolvable must have $M(9)$ or $A_4$ as a quotient.

\section{Primitive MP groups} \label{primitiveMPgroups}

In order to better understand finite MP and SMP groups, we will use the notion of primitive group. Given a group $G$ and a subgroup $H$ of $G$, recall that the normal core of $H$ in $G$, denoted $H_G$, is the largest normal subgroup of $G$ contained in $H$, in particular $H_G = \bigcap_{g \in G} g^{-1}Hg$. Recall that a finite group $G$ is called primitive if it admits a maximal subgroup $M$ with trivial normal core $M_G = \{1\}$. If $G$ is any finite group and $M$ is any maximal subgroup of $G$, then the quotient $G/M_G$ is primitive, since $M/M_G$ is a maximal subgroup of $G/M_G$ with trivial normal core. If $M_1,\ldots,M_n$ are conjugacy class representatives of the maximal subgroups of a finite group $G$, then the kernel of the natural map $G \to \prod_{i=1}^n G/(M_i)_G$ equals the Frattini subgroup $\Phi(G)$, that is, the intersection of the maximal subgroups of $G$. This implies that $G/\Phi(G)$ is a subdirect product of primitive quotients of $G$. By Proposition \ref{quoMP}, we deduce that if $G$ is any finite MP group, then $G/\Phi(G)$ is a subdirect product of finite primitive MP groups. This means that it makes sense to study finite primitive MP groups.

Recall that the rank of a transitive permutation group $G$ acting on a set $\Omega$ is the number of orbits of its point stabilizer acting on $\Omega$. For example, a transitive permutation group is $2$-transitive if and only if it has rank $2$. Theorem \ref{main1} is a surprising result since it is a complete classification of all finite primitive MP (and SMP) groups. They are all $2$-transitive except $C_3$ and $C_7 \rtimes C_3$, which have rank $3$. It is worth noting the following immediate consequence of the theorem: except for $C_2$ and $C_3$, every finite primitive MP group is a Frobenius group, i.e. a semidirect product $N \rtimes H$ with the property that $C_N(h)=\{1\}$ for all $1 \neq h \in H$.

Recall that a finite collineation group of an affine line is a group of the form $V \rtimes G_0$ where $V$ is the additive group of a finite field of size $q$ and $G_0$ is a subgroup of $\GammaL(1,q) = \mathbb{F}_q^{\ast} \rtimes \Aut(\mathbb{F}_q)$, the group of semilinear transformations of $\mathbb{F}_q$. See \cite[Chapter 2]{Taylor}. We now prove Theorem \ref{main1bis}.
\begin{proof}[Proof of Theorem \ref{main1bis}]
Let $G$ be a finite primitive group and assume that $V:=\soc(G)$, the socle of $G$, is abelian. Then $V$ is elementary abelian, $V=\mathbb{F}_p^n$, $p$ a prime, and $G=V \rtimes G_0$ with $G_0$ acting irreducibly on $V$. Set $q:=p^n=|V|$, the primitivity degree of $G$. We assume that $G_0$ is MP and that any two elements of $V$ are conjugate or inverse-conjugate in $V$. The strategy of the proof is to reduce the analysis to a finite and small list of degrees and to check the corresponding primitive groups using GAP. By Proposition \ref{solvability}, the group $G_0$, and hence $G$, is solvable. Since $V$ is a minimal normal subgroup of $G$, if $0 \neq x \in V$ then $\langle x^G \rangle = V$, so if $y$ is any other nontrivial element of $V$, then $\langle y^G \rangle = V$ hence $y$ is conjugate to one of $x$, $-x$. This shows that $\{0\} \cup x^G \cup (-x)^G = V$. The group $G_0$ acts faithfully and irreducibly on $V$ with at most $3$ orbits. If $G$ acts transitively on the nonzero vectors of $V$ then $G$ is $2$-transitive. In this case, \cite[Table 7.3]{Cameron} implies that either $G_0$ is a subgroup of $\GammaL(1,q)$ or $q$ is in the following list: $2^2$, $3^2$, $3^4$, $5^2$, $7^2$, $11^2$, $23^2$. 
If $G$ is 2-transitive and of primitivity degree $q$ in  the above list, then $q\in \{2^2,3^2\}$, and  $G\in \{A_4,M(9)\}$ if $G$ is MP else $G\in \{S_4, \AGammaL(1,9)\}$, in particular $G_0\leqslant \GammaL(1,q)$.
We can therefore suppose that either $G$ has rank $3$, in other words $G$ admits $3$ orbits in its action on $V$, or $G_0 \leqslant \GammaL(1,q)$. We will discuss these two cases below.

Assume $G$ has rank $3$. Faulser and Kallaher \cite{F,FK} classified the maximal solvable finite primitive groups of rank $3$ and Liebeck \cite{L} classified all finite primitive groups of rank $3$. Since the two nontrivial orbits are inverse of each other, denoting their sizes by $a,b$ we have $a=b$. According to \cite[Theorem 1.1]{F}, if $G$ is not a collineation group of an affine line, then it either acts imprimitively on $V$ (item 3) or it falls into a finite list of exceptions (item 2). If $G$ does not act imprimitively on $V$ then, since $a=b$, we deduce that $G$ is primitive of degree $47^2$ and contained in a maximal primitive solvable group of degree $3$ of order $47^2 \cdot 24 \cdot 46$ and a GAP check shows that there is no group $G$ satisfying this. Assume now that $G$ acts imprimitively on $V$ and there is a decomposition $V = V_1 \oplus V_2$ into minimal imprimitivity subspaces of $V$ such that the two nonzero orbits are $V_1 \cup V_2-\{0\}$ and $V-(V_1 \cup V_2)$. The equality $a=b$ translates into $|V|=2(|V_1|+|V_2|-1)-1$. Reducing modulo $p$ we deduce that $p=3$ and one of $V_1$, $V_2$ has size $3$, so the other one also does (by imprimitivity). We deduce that $|V|=|V_1 \oplus V_2|=3^2=9$ hence $G$ has degree $9$. By a simple inspection we see that there is no group $G$ satisfying this. We successfully reduced to the case $G_0 \leqslant \GammaL(1,q)$.

Now assume $G_0$ is a subgroup of $\GammaL(1,q)$. Recall that $\GammaL(1,q)$ is the semidirect product $\mathbb{F}_q^{\ast} \rtimes \Aut(\mathbb{F}_q)$, in particular, if $H$ denotes the intersection between $G_0$ and $\mathbb{F}_q^{\ast}$, then both $H$ and $G_0/H$ are cyclic groups. It follows that $G_0/H$ is cyclic and MP, so $|G_0/H| \in \{1,2,3,4,6\}$, hence $G_0$ is an extension $C_m.C_{\ell}$ with $m,\ell$ positive integers and $\ell \leqslant 6$. If $t_1,t_2 \in C_m$ have order $m$ then their normal closure in $G_0$ is $C_m$ hence, since $G_0$ is MP, $t_1$ is conjugate to $t_2$ or to its inverse. This implies that $\varphi(m) \leqslant 2 \ell \leqslant 12$, where $\varphi$ denotes Euler's phi function, hence $m \leqslant 42$. It follows that 
$$q = |V| \leqslant 1+2|G_0| \leqslant 1+2 \cdot 42 \cdot 6 = 505.$$ 
We deduce that the primitivity degree of $G$ is at most $505$. We can now proceed by inspection using GAP.
\end{proof}

Given a finite solvable group $G$, recall that we denote its Fitting height by $h(G)$. Of course $h(G)=1$ if and only if $G$ is nilpotent. If $F(G)$ denotes the Fitting subgroup of $G$, $F_0(G)=\{1\}$ and $F_{i+1}(G)/F_i(G) = F(G/F_i(G))$ for $i \geqslant 0$, then $h(G)$ equals the smallest $n$ such that $F_n(G)=G$. Since $F(G/\Phi(G))=F(G)/\Phi(G)$, it follows that $h(G)=h(G/\Phi(G))$. We can now prove Corollaries \ref{Fitting} and \ref{consequences}.

\begin{proof}[Proof of Corollary \ref{Fitting}]
By Theorem \ref{main1} the Fitting height of a finite primitive MP group is at most $2$. The result now follows from the theorem and the fact that if $G$ is a finite solvable group then $h(G)=h(G/\Phi(G))$, moreover if $H \leqslant G$ then $h(H) \leqslant h(G)$, and $h(A \times B) = \max\{h(A),h(B)\}$ for any two finite solvable groups $A,B$.
\end{proof}

\begin{proof}[Proof of Corollary \ref{consequences}]
Item 1 follows from Theorem \ref{main1} and the fact that every prime divisor of the order of a finite group $G$ divides $|G/\Phi(G)|$ (this is an easy consequence of the Schur-Zassenhaus theorem and the fact that, by Frattini's argument, a Sylow $p$-subgroup of $\Phi(G)$ is normal in $G$). Items 2 and 3 follow from the theorem and the fact that a finite group $G$ is nilpotent if and only if $G/\Phi(G)$ is nilpotent.
\end{proof}

\section{Direct products and crown-based powers} \label{dirprod}

An important observation is that a finite direct product of MP groups is not necessarily MP. An easy example is given by the cyclic group $C_{12} \cong C_4 \times C_3$, which is a non-MP group isomorphic to a direct product of two MP groups (of coprime orders). One may also ask whether a direct power of an MP group is necessarily MP. Again, this is false, an example is given by $G \times G$, which is not MP, where $G = C_7 \rtimes C_3$. Therefore, Lemma \ref{quoMP} implies that, for this choice of $G$, the direct power $G^n$ is MP if and only if $n=1$. Not that, by Lemma \ref{quoMP}, a direct product of finite groups is not MP if one of the factors is not MP. 

Note that if $\pi:A \to B$ is a surjective group homomorphism and $a \in A$, then $\pi(a^A)=\pi(a)^B$, therefore $\pi \left( \langle a^A \rangle \right) = \langle \pi(a^A) \rangle = \langle \pi(a)^B \rangle$. We will apply this observation many times to the canonical projections of a direct product of groups on its factors.

We have the following results. 

\begin{prop}\label{powersdpMP}
Let $n$ be a positive integer and, for $i \in \{1,\ldots,n\}$, assume $G_i$ is a finite group equal to a semidirect product $N_i \rtimes H_i$ with $N_i \neq \{1\}$ and such that the following hold.
\begin{enumerate}
\item $N_i-\{1\}$ is a conjugacy class in $G_i$ for all $i \in \{1,\ldots,n\}$.
\item $C_{N_i}(h) = \{1\}$ for all $h \in H_i-\{1\}$ and all $i \in \{1,\ldots,n\}$. 
\item $L= \prod_{i=1}^n H_i$ is MP.
\end{enumerate}
Then $\Pi = \prod_{i=1}^n G_i$ is MP.
\end{prop}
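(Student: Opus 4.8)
The plan is to exploit that hypotheses (1) and (2) turn each $G_i = N_i \rtimes H_i$ into a Frobenius group with kernel $N_i$. First I would record the structural facts needed about the factors. By (1) all nontrivial elements of $N_i$ are $G_i$-conjugate, so they share a common order, necessarily a prime $p$; hence $N_i$ is a $p$-group, and comparing an arbitrary $w \in N_i\setminus\{1\}$ with a nontrivial central element $z$ of $N_i$ (one has $w = z^{g}$ with $g = mh$, whence $w = z^h \in Z(N_i)$ since $Z(N_i)$ is $G_i$-invariant) shows $N_i = Z(N_i)$, so $N_i$ is elementary abelian. From this I extract two coordinate-wise conjugacy facts. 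Fact (a): any two nontrivial elements of $N_i$ are conjugate in $G_i$; in particular $n$ is conjugate to $n^{-1}$ for every $1 \neq n \in N_i$, so inside the kernel there is \emph{no} distinction between conjugacy and inverse-conjugacy. Fact (b): by (2) the map $m \mapsto m^{-1}(hmh^{-1})$ is a bijection of $N_i$ for each $1 \neq h \in H_i$ (in additive form it is $m \mapsto (h-1)m$, injective because $C_{N_i}(h) = \{1\}$), so conjugating $h$ by $N_i$ sweeps out the whole coset $N_i h$; thus every element of $G_i \setminus N_i$ is conjugate to (the copy in $H_i \leqslant G_i$ of) its $H_i$-component, and consequently two elements of $G_i \setminus N_i$ whose $H_i$-components are conjugate in $H_i$ are themselves conjugate in $G_i$.

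With these in hand I would take $x = (x_1,\dots,x_n)$ and $y=(y_1,\dots,y_n)$ in $\Pi$ with $\langle x^\Pi\rangle = \langle y^\Pi\rangle$ and aim to show $x$ is conjugate to $y$ or to $y^{-1}$. Applying the observation that a surjection carries normal closures to normal closures, I would project along each factor map $\Pi \to G_i$ to obtain $\langle x_i^{G_i}\rangle = \langle y_i^{G_i}\rangle$ for every $i$. Since $\langle x_i^{G_i}\rangle = \{1\}$ exactly when $x_i=1$, and $\langle x_i^{G_i}\rangle \leqslant N_i$ exactly when $x_i \in N_i$, this already forces the ``type'' of the two elements to agree in each coordinate: $x_i = 1 \Leftrightarrow y_i = 1$ and $x_i \in N_i \Leftrightarrow y_i \in N_i$. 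Then I would project along the single quotient map $\Pi \to \Pi/N = L$, where $N = \prod_i N_i$, again using the surjection observation, to get the global equality $\langle \bar x^L\rangle = \langle \bar y^L\rangle$ for the images $\bar x, \bar y \in L$.

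Now the hypothesis that $L = \prod_i H_i$ is MP does the decisive work: it yields that $\bar x$ is conjugate to $\bar y$ or to $\bar y^{-1}$ in $L$. In the first case a conjugator $t = (t_1,\dots,t_n)\in L$ gives $\bar x_i$ conjugate to $\bar y_i$ in each $H_i$, and I would reassemble coordinate by coordinate. For an index with $\bar x_i \neq 1$ both $x_i,y_i$ lie outside $N_i$, and Fact (b) upgrades the conjugacy of their $H_i$-parts to conjugacy of $x_i$ and $y_i$ in $G_i$; for an index with $\bar x_i = 1$ (so $\bar y_i = 1$ too) both $x_i, y_i$ lie in $N_i$, and the type information together with Fact (a) gives $x_i$ conjugate to $y_i$ (either both trivial, or both nontrivial and hence conjugate). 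Thus $x_i$ is conjugate to $y_i$ for all $i$, i.e. $x$ is conjugate to $y$ in $\Pi$. The second case is word-for-word the same with $y$ replaced by $y^{-1}$ (noting that $y_i^{-1}$ has the same membership type as $y_i$ and $H_i$-part $\bar y_i^{-1}$), yielding that $x$ is conjugate to $y^{-1}$. Either way $\Pi$ is MP.

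The main obstacle to watch for is that $\langle x^\Pi\rangle$ is in general only a \emph{subdirect} product of the subgroups $\langle x_i^{G_i}\rangle$, not their full direct product, so one cannot naively read the complete conjugacy type off coordinate by coordinate and then choose the conjugate-versus-inverse alternative independently in each factor; doing exactly that is what fails for $(C_7\rtimes C_3)^2$. The resolution is to let the MP property of $L$ fix the alternative \emph{uniformly} across all coordinates at once, which is legitimate precisely because Fact (a) removes any constraint coming from the kernel coordinates. This is where hypothesis (1) is indispensable: it is what guarantees that the conjugate/inverse-conjugate dichotomy ``lives'' only in the quotient $L$, where it is resolved coherently.
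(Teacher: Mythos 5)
Your proof is correct and follows essentially the same route as the paper's: show each $N_i$ is abelian with $N_i-\{1\}$ a single self-inverse class, use $C_{N_i}(h)=\{1\}$ to conjugate elements outside the kernel into $H_i$, project to the factors to match coordinate types, and let the MP property of $L$ (the paper uses the sub-product $H_I$, a quotient of $L$) fix the conjugate-versus-inverse alternative uniformly before reassembling coordinatewise. The only difference is organizational: the paper splits into the cases $x,y\in\prod_i N_i$ and $x,y\notin\prod_i N_i$, while you treat both at once via the quotient map to $L$.
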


\begin{proof}
First, note that $N_i$ is abelian for all $i$. Indeed, since $N_i-\{1\}$ is a conjugacy class in $G_i$, it is clear that $N_i$ is a $p_i$-group for some prime $p_i$, moreover the derived subgroup $N_i'$ is normal in $G$ (being characteristic in $N_i$). If $N_i'$ is nontrivial then it contains the whole conjugacy class $N_i-\{1\}$ so it equals $N_i$, however this is not possible since $N_i$ is a finite $p_i$-group. Since $L$ is MP, Lemma \ref{quoMP} implies that $H_i$ is also MP for all $i$. Write $K:=\prod_{i=1}^n N_i$ so that $\Pi = K \rtimes L$. Choose any $1 \neq u_i \in N_i$. Let $x,y \in \Pi$ have the same normal closure in $\Pi$. We need to show that $x$ and $y$ are conjugate or inverse-conjugate (in $\Pi$). Assume first that one of $x,y$ is contained in $K$, then of course both are contained in $K$ since $K$ is normal in $\Pi$. Without loss of generality, since $N_i-\{1\}$ is the conjugacy class of $u_i$ in $G_i$, we can assume that $x_i,y_i \in \{1,u_i\}$ for all $i \in \{1,\ldots,n\}$. It is easy to see that $x_i=1$ if and only if $y_i=1$, for all $i$. This implies that $x_i=y_i$ for all $i$. This shows that in this case $x$ and $y$ are conjugate.

Now assume that $x,y$ are not contained in $K$. Observe that every element of $G_i$ outside $N_i$ is conjugate to some element of $H_i$. Indeed, if $h \in H_i-\{1\}$ and $v \in N_i$ then the conjugate $h^v=v^{-1} h v$ is equal to $(-v+v^{h^{-1}})h$ and the set $\{-v+v^{h^{-1}}\ :\ v \in N_i\}$ is equal to $N_i$ because the map $N_i \to N_i$, $v \mapsto -v+v^{h^{-1}}$ is injective, hence surjective, since $C_{N_i}(h)=\{1\}$. This argument shows that, for all $i \in \{1,\ldots,n\}$, either both $x_i,y_i$ belong to $N_i$, in which case they are conjugate in $G_i$, or they don't, in which case they are conjugate in $G_i$ to elements of $H_i$. Therefore, calling $I$ the set of indices such that both $x_i$ and $y_i$ do not belong to $N_i$, we may assume that $x_i,y_i \in H_i$ for all $i \in I$. Call $\pi_I$ the canonical projection $\Pi \to H_I$, where $H_I = \prod_{i \in I} H_i$. Then $\pi_I(x)$ and $\pi_I(y)$ generate the same normal closure in $H_I$, which is MP (it is a quotient of $L$, which is MP), hence there exists $l_i \in H_i$, for all $i \in I$, such that $y_i^{\varepsilon} = l_i^{-1} x_i l_i$, where $\varepsilon \in \{1,-1\}$ is independent of $i$. If $i \not \in I$, then $x_i,y_i$ are conjugate and, at the same time, inverse-conjugate in $G_i$, so there exists $l_i \in H_i$ such that $y_i^{\varepsilon} = l_i^{-1} x_i l_i$. Let $l=(l_1,\ldots,l_n)$. Then $y^{\varepsilon} = l^{-1}xl$. Hence in this remaining case $x$ and $y$ are conjugate or inverse-conjugate.
\end{proof}

\begin{prop}\label{SMPclass} Let $A,B$ be two finite groups. The following assertions hold.
\begin{enumerate}
\item If $B$ is SMP, then $A \times B$ is MP if and only if $A$ is MP.
\item $A\times B$ is SMP if and only if $A$ and $B$ are SMP.
\end{enumerate}
\end{prop}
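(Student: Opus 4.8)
The plan is to prove Proposition~\ref{SMPclass} by exploiting the structure of normal closures in a direct product together with the characterization of SMP as ``MP plus every element is inverse-conjugate to itself''. The key structural observation, already flagged in the excerpt, is that for the canonical projections $\pi_A\colon A\times B\to A$ and $\pi_B\colon A\times B\to B$ one has $\pi_A(\langle (a,b)^{A\times B}\rangle)=\langle a^A\rangle$ and likewise for $B$. In fact, because conjugation in $A\times B$ acts coordinatewise, the normal closure splits as a direct product: $\langle (a,b)^{A\times B}\rangle=\langle a^A\rangle\times\langle b^B\rangle$. This factorization is the engine of the whole proof, so I would state and justify it first.

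For item (2), the ``if'' direction: suppose $A$ and $B$ are both SMP and take $x=(a_1,b_1)$, $y=(a_2,b_2)$ with $\langle x^{A\times B}\rangle=\langle y^{A\times B}\rangle$. Projecting and using the factorization gives $\langle a_1^A\rangle=\langle a_2^A\rangle$ and $\langle b_1^B\rangle=\langle b_2^B\rangle$. Since $A$ is SMP, $a_1$ is conjugate to $a_2$; since $B$ is SMP, $b_1$ is conjugate to $b_2$; combining the two conjugating elements coordinatewise shows $x$ is conjugate to $y$, so $A\times B$ is SMP. For the ``only if'' direction, I would use Lemma~\ref{quoMP}: the projections realize $A$ and $B$ as quotients of $A\times B$, and the SMP passes to quotients, so if $A\times B$ is SMP then so are $A$ and $B$.

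For item (1), assume $B$ is SMP. The direction ``$A\times B$ MP $\Rightarrow$ $A$ MP'' is again immediate from Lemma~\ref{quoMP} since $A$ is a quotient of $A\times B$. The substantive direction is ``$A$ MP $\Rightarrow$ $A\times B$ MP''. Here I would take $x=(a_1,b_1)$ and $y=(a_2,b_2)$ with equal normal closures, so $\langle a_1^A\rangle=\langle a_2^A\rangle$ and $\langle b_1^B\rangle=\langle b_2^B\rangle$. Since $A$ is MP there is a sign $\varepsilon\in\{1,-1\}$ and an element of $A$ conjugating $a_1$ to $a_2^{\varepsilon}$. Since $B$ is SMP, $b_1$ is conjugate to $b_2$, and because every element of an SMP group is inverse-conjugate to itself, $b_2$ is also conjugate to $b_2^{-1}$, hence $b_1$ is conjugate to \emph{both} $b_2$ and $b_2^{-1}$. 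This flexibility in the $B$-coordinate is exactly what lets me match the sign $\varepsilon$ forced by the $A$-coordinate: whatever $\varepsilon$ turns out to be, I can find an element of $B$ conjugating $b_1$ to $b_2^{\varepsilon}$. Assembling the two conjugators coordinatewise yields an element of $A\times B$ conjugating $x$ to $y^{\varepsilon}$.

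The only real subtlety — and the step I would be most careful about — is the sign-matching in item (1): the Magnus property of $A$ produces a sign $\varepsilon$ that is dictated by the $A$-coordinate and cannot be chosen freely, so the argument genuinely requires that the $B$-coordinate absorb \emph{either} sign. This is precisely where the hypothesis that $B$ is SMP (equivalently, that each element of $B$ is conjugate to its inverse), rather than merely MP, is used, and it explains why the statement is asymmetric in $A$ and $B$. Everything else reduces to the normal-closure factorization and Lemma~\ref{quoMP}, with no delicate calculation required.
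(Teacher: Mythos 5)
Your overall strategy is the paper's: derive $\langle a_1^A\rangle=\langle a_2^A\rangle$ and $\langle b_1^B\rangle=\langle b_2^B\rangle$ from the hypothesis, use the Magnus Property of $A$ to produce a sign $\varepsilon$, and use the fact that every element of the SMP group $B$ is conjugate to its own inverse to absorb that sign in the second coordinate; the sign-matching step you highlight is exactly how the paper argues, and your direct proof of item (2) is a harmless variant of the paper's deduction of (2) from (1). However, there is one genuine error: the ``key structural observation'' you propose to state and justify first, namely $\langle (a,b)^{A\times B}\rangle=\langle a^A\rangle\times\langle b^B\rangle$, is false in general. In $C_2\times C_2$ the normal closure of $(a,a)$ with $a\neq 1$ is the diagonal subgroup of order $2$, not $\langle a\rangle\times\langle a\rangle$ of order $4$; in general the normal closure of $(a,b)$ is only a subdirect product of $\langle a^A\rangle$ and $\langle b^B\rangle$, so the justification you plan for this step cannot be carried out.

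Fortunately you never use more than the two projection equalities, and these follow from the weaker, true fact that the paper records just before stating its results on direct products: for a surjective homomorphism $\pi\colon G\to H$ one has $\pi\bigl(\langle x^{G}\rangle\bigr)=\langle \pi(x)^{H}\rangle$. Applying the coordinate projections $\pi_A$ and $\pi_B$ to the equality $\langle x^{A\times B}\rangle=\langle y^{A\times B}\rangle$ immediately gives $\langle a_1^A\rangle=\langle a_2^A\rangle$ and $\langle b_1^B\rangle=\langle b_2^B\rangle$, which is all your argument requires. Replace the false factorization by this projection argument and your proof is correct and essentially identical to the paper's.
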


\begin{proof}
Item (2) follows from item (1) because finite SMP groups are closed under taking quotients and a finite MP group is SMP if and only if every element is conjugate to its own inverse. So we only prove item (1). If $A\times B$ is MP then $A$ is MP by Lemma \ref{quoMP}.
Conversely assume that $A$ is MP.  
We need to show that $A \times B$ is MP. 
Let $(a_1,b_1)$ and $(a_2,b_2)$ be elements of $A \times B$ such that 
\begin{equation}\label{e:dpsmpa1}
\langle (a_1,b_1)^{A \times B}\rangle=\langle (a_2,b_2)^{A \times B} \rangle.\\
\end{equation}
Then $\langle a_1^A \rangle = \langle a_2^A \rangle$ and $\langle b_1^B \rangle=\langle b_2^B \rangle$. Therefore there exist $r \in A$, $s \in B$ such that $a_2=(a_1^{\pm 1})^r$ and $b_2=b_1^s$. Since $B$ is SMP, we can write $b_1=(b_1^{-1})^h$ for some $h \in B$. We either have $(a_2,b_2) = (a_1^r,b_1^s) = (a_1,b_1)^{(r,s)}$ or $(a_2,b_2)=((a_1^{-1})^r,b_1^s)=(a_1^{-1},b_1^{-1})^{(r,hs)}$. Therefore $(a_1,b_1)$ and $(a_2,b_2)$ are conjugate or inverse-conjugate, and so $A \times B$ is MP.
\end{proof}

For example the groups of the form $C_2^k \times C_3^n \times S_3^m$ are MP and they are SMP if $n=0$. The above proposition establishes that a direct product of finite groups is SMP if and only if all the factors are SMP. For MP groups, the situation is much more complicated. 
However we can prove Proposition \ref{MPdir}.

\begin{proof}[Proof of Proposition \ref{MPdir}]
We consider item (2) as item (1) is immediate from Lemma \ref{quoMP}. By Theorem \ref{main1}, for $1\leqslant i \leqslant n$, $G_i$ is one of the following groups. 
$$C_2,\ C_3,\ S_3,\ A_4,\ \mathrm{AGL}(1,5),\ C_7\rtimes C_3,\ \mathrm{AGL}(1,7),\ M(9).$$
Observe that the finite primitive MP groups appear as semidirect products
$$C_2=C_2 \rtimes 1,\ C_3=C_3 \rtimes 1,\ S_3=C_3 \rtimes C_2,\ A_4=(C_2\times C_2) \rtimes  C_3,\ \mathrm{AGL}(1,5)=C_5 \rtimes C_4,$$ 
$$C_7\rtimes C_3, \ \mathrm{AGL}(1,7) = C_7 \rtimes (C_2\times C_3),\ M(9)=(C_3 \times C_3) \rtimes Q_8.$$
By Theorem \ref{main1}, $C_2$ and $M(9)$ are SMP and so Proposition \ref{SMPclass} yields that $C_2^n\times M(9)^\ell$ is SMP for any nonnegative integers $n$ and $\ell$. Since $C_3^m$ and $C_4^m$ are MP for any nonnegative integer $m$, Proposition \ref{SMPclass} gives that $C_2^n \times C_3^m\times M(9)^\ell$ and $C_2^n \times C_4^m \times M(9)^\ell$ are MP for any nonnegative integers $n$, $m$ and $\ell$.

If none of the $G_i$'s are $C_7\rtimes C_3$ or $\mathrm{AGL}(1,5)$ then $G$ is MP by Proposition \ref{powersdpMP}.

Suppose that one of the $G_i$'s is $C_7\rtimes C_3$, without loss of generality $G_1=C_7\rtimes C_3$. Note that by Lemma \ref{quoMP}$, (C_7\rtimes C_3)\times \mathrm{AGL}(1,5)$ is not MP, as it has a quotient isomorphic to $C_3\times C_4$ which is not MP. Using GAP, one checks that $G_1\times G_2$ is MP if and only if $G_2$ is SMP.  It now follows from Proposition \ref{SMPclass} that $G$ is MP if and only if $G_j$ is SMP for all $j = 2, \ldots, n$. 

Suppose finally that none of the $G_i$'s are $C_7\rtimes C_3$ and that one of them is $\mathrm{AGL}(1,5)$, say $G_1=\mathrm{AGL}(1,5)$.   If $G_j\in \{C_3, A_4, \mathrm{AGL}(1,7)\}$  for some $2\leqslant j\leqslant n$ then $G_1\times G_j$ has a quotient isomorphic to $C_3\times C_4$ which is not MP, and so by Lemma \ref{quoMP} $G$ is not MP. Finally if $G_j\not \in \{C_3, A_4, \mathrm{AGL}(1,7)\}$  for  $2\leqslant j\leqslant n$ then $G$ is MP by Proposition \ref{powersdpMP}.  
\end{proof}

By \cite{Baer} a finite primitive solvable group is monolithic. We recall some facts about crown-based powers of a finite monolithic primitive group $L$. Let $V:=\mathrm{soc}(L)$ be the socle of $L$. For every positive integer $k$ one has
$$L_k=\{(l_1,\ldots, l_k)\in L^k: l_1 \equiv \ldots \equiv l_k \mod V \}= V^k \mathrm{diag}(L^k)$$
where $\mathrm{diag}(L^k)=\{(l_1,\ldots, l_k)\in L^k: l_1 = \ldots = l_k\}\cong L$.
We have $\mathrm{soc}(L_k)=V^k$, $L_k/V^k\cong L/V$, and if $k>1$ then the quotient  $L_k/U$ of $L_k$ by any minimal normal subgroup $U$ of $L_k$ satisfies $L_k/U\cong L_{k-1}$. Also every normal subgroup $N$ of $L_k$ either satisfies $N\leqslant V^k$ or $V^k<N$. See also \cite[page 657]{DL}.

Suppose  $L$ is solvable. The socle $V$ is then complemented in $L$ and we write $L=VH$ where $H$ is a complement of $V$ in $L$. In this case, $L_k= V^k\rtimes \mathrm{H}$ and we can therefore think of elements of $L_k$ as products $vh$ where $v\in V^k$ and $h \in H$. In other words, $L_k$ is the semidirect product $V^k\rtimes H$ where $H$ acts coordenatewise and in the same way on all the factors.

We can now prove Proposition \ref{MPcrownbased}.

\begin{proof}[Proof of Proposition \ref{MPcrownbased}]
Suppose that $L_k$ is (S)MP. Since $L_{k-1}$ is a quotient of $L_k$, we deduce that $L_i$ is (S)MP for all $i$ with $1 \leqslant i \leqslant k$. If $k=1$ then, by Theorem \ref{main1}, $L$ is indeed one the the eight finite primitive MP groups listed therein. If $L=M(9)$ then $L_2$ is not MP, so $L_k$ is MP if and only if $k=1$. If $L\in \{C_2,C_3\}$ then, for every $k>0$, $L_k=L^k$ and so $L_k$ is MP. In particular if $L=C_2$ then $L_k$ is SMP for every $k>0$. Suppose now that $L$ is isomorphic to one of the following groups:
$$S_3,\ A_4,\ \mathrm{AGL}(1,5),\ C_7\rtimes C_3,\ \mathrm{AGL}(1,7).$$
It remains to show that $L_k$ is MP for every $k>1$. Note that $L$ is a Frobenius group, therefore $L_k$ is a Frobenius group for all $k \geqslant 1$, in particular the conjugacy classes of $L_k$ not contained in $V^k$ are precisely the cosets of $V^k$ in $L_k$. Since $H$ is (S)MP, this implies that two elements of $L_k$ outside $V^k$ that generate the same normal subgroup are either conjugate or inverse-conjugate (and they are conjugate if $L$ is SMP). Assume now that $x,y$ are nontrivial elements of $V^k$ that generate the same normal subgroup of $L_k$.

Assume $L$ is not isomorphic to $A_4$. Then $L=\langle a,b: a^p = b^n = 1, bab^{-1} = a^q \rangle$ for some some prime $p$ and some positive integers $n$ and $q$ where $n$ divides $p-1$, $q<p$ and $q$ has order $p-1$ modulo $p$. In particular $V=\langle a\rangle$ and $H=\langle b \rangle$. Our assumption on $L$ implies that every subgroup of $V^k$ is normal in $L_k$, therefore $\langle x\rangle=\langle x \rangle^{L_k}=\langle y\rangle^{L_k}=\langle y\rangle$. Since $x,y$ have order $p$, there is a positive integer $r$ coprime to $p$ such that $y=x^r$. There exists $h \in H$ that induces one of the two automorphisms $t \mapsto t^r$, $t \mapsto t^{-r}$ on $V$ (in the case $L=C_7 \rtimes C_3$, we can choose only one of these two). It follows that $y=(x^{\pm 1})^h$. This argument also proves that $L_k$ is SMP if $L$ is SMP.

If $L \cong A_4$ then $x,y$ have order $2$ and $H \cong C_3$. Any product $x^{h_1} \ldots x^{h_t}$ (with $h_i \in H$ for all $i$), if not equal to $1$, has the form $x^h$ for some $h \in H$. This is because we can think of $H$ as being generated by the matrix $u = \left( \begin{array}{cc} 0 & 1 \\ 1 & 1 \end{array} \right)$ over $\mathbb{F}_2$, satisfying $u^2+u=1$, so that any sum of powers of $u$ is either $0$ or a power of $u$. Therefore 
$$x^{u^{i_1}} \ldots x^{u^{i_t}} = x^{u^{i_1}+\ldots+u^{i_t}} \in \{x,x^u,x^{u^2}\} = x^{L_k}.$$ 
This proves that $\langle x \rangle^{L_k} \setminus \{1\} = x^{L_k}$ and the same holds for $y$, so that $x,y$ are conjugate in $L_k$. We have now established that $L_k$ is MP.
\end{proof}

\section{$p$-rank of finite MP groups} \label{sectionprank}

We have described the structure of $G/\Phi(G)$ when $G$ is a finite MP group, however there are still many questions open, due to the fact that many properties of the Frattini quotient of a group $G$ are not carried over to $G$. In this section we prove that, in the case of finite MP groups, the $p$-rank does not change when passing to the Frattini quotient.

\begin{lemma}\label{l:power23}
Let $m=2^a$ and $n=3^b$ where $a, b$ are nonnegative integers. Then $|m-n|=1$ if and only if $(a,b)\in \{(1,0),(1,1),(2,1),(3,2)\}$.
\end{lemma}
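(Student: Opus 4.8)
The plan is to rephrase the condition $|m-n|=1$ as the pair of Diophantine equations $2^a-3^b=1$ and $3^b-2^a=1$ and to solve each by elementary means. The forward implication is an immediate check: $(a,b)=(1,0),(1,1),(2,1),(3,2)$ give differences $2-1,\,2-3,\,4-3,\,8-9$, all of absolute value $1$. For the converse I would first dispose of the boundary cases. If $b=0$ the condition reads $|2^a-1|=1$, forcing $2^a=2$, i.e. $(a,b)=(1,0)$. So assume $b\geqslant1$. If $a=0$ we get $|1-3^b|=1$, impossible for $b\geqslant1$; if $a=1$ we get $|2-3^b|=1$, where only $3^b-2=1$ survives and yields $(a,b)=(1,1)$. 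Hence from now on I may assume $a\geqslant2$ and $b\geqslant1$, and I treat the two equations separately.

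For $2^a=3^b+1$ with $b\geqslant1$, reducing modulo $3$ gives $2^a\equiv1\pmod3$; since $2\equiv-1\pmod3$ this forces $a$ to be even, say $a=2c$ with $c\geqslant1$. Then $3^b=2^{2c}-1=(2^c-1)(2^c+1)$, so each factor is a power of $3$. As $2^c-1$ and $2^c+1$ are odd and differ by $2$, their $\gcd$ divides $2$ and hence equals $1$, so the smaller factor must be $3^0=1$; thus $2^c-1=1$, giving $c=1$, $a=2$, and $2^c+1=3$ yields $b=1$. This is the pair $(2,1)$.

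For $3^b=2^a+1$ with $a\geqslant2$, reducing modulo $4$ gives $3^b\equiv1\pmod4$; since $3\equiv-1\pmod4$ this forces $b$ to be even, say $b=2d$ with $d\geqslant1$. Then $2^a=3^{2d}-1=(3^d-1)(3^d+1)$, so, writing $3^d-1=2^s$ and $3^d+1=2^t$ with $1\leqslant s<t$ and $s+t=a$, subtraction gives $2^t-2^s=2$, i.e. $2^s(2^{t-s}-1)=2$. Since $2^{t-s}-1$ is odd this forces $s=1$ and $t-s=1$, so $t=2$; then $3^d-1=2$ gives $d=1$, $b=2$, and $a=s+t=3$, the pair $(3,2)$. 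The whole argument carries essentially no serious obstacle: its only content is the two parity-forcing congruences (modulo $3$ and modulo $4$) together with the difference-of-squares factorization, which in each case reduces the problem to recognizing that the two near-consecutive prime-power factors can only be $1$ and $3^b$, or two powers of $2$ differing by $2$.
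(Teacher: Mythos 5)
Your proof is correct and follows essentially the same route as the paper's: in each of the two equations the key steps are a congruence (mod $3$, resp.\ mod $4$) handling the odd-exponent case and the difference-of-squares factorization $x^2-1=(x-1)(x+1)$ handling the even-exponent case. The only difference is organizational --- you peel off the boundary cases $b=0$ and $a\leqslant 1$ first and then use the congruences to force the exponents to be even, whereas the paper cases directly on the parity of the exponent --- but the mathematical content is identical.
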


This is fairly well-known but we include a proof.

\begin{proof}
We prove the ``only if'' statement as the converse is obvious. Assume first that $3^b = 2^a-1$. If $a=2v$ is even, then $3^b = (2^v-1)(2^v+1)$ hence both $2^v-1$ and $2^v+1$ are powers of $3$, implying that $(a,b)=(2,1)$. If $a$ is odd then $3^b = 2^a-1 \equiv 1 \mod 3$ implying $(a,b)=(1,0)$. Assume now that $2^a = 3^b-1$. If $b=2u$ is even, then $2^a = 3^b-1 = (3^u-1)(3^u+1)$ hence both $3^u+1$ and $3^u-1$ are powers of $2$, so $u=1$ and $(a,b)=(3,2)$. If $b$ is odd, then $2^a = 3^b-1 \equiv 2 \mod 4$ implying that $(a,b)=(1,1)$.
\end{proof} 

The proof of the following lemma is essentially the same as the proof of \cite[Theorem 3.2]{K}.

\begin{lemma} \label{frattinicf}
Let $G$ be a finite solvable group and let $N$ be a minimal normal subgroup of $G$ contained in the Frattini subgroup of $G$. Let $p$ be the prime number dividing $|N|$ and let $K$ be a normal subgroup of $G$ containing $N$. If $K$ centralizes all chief $p$-factors of $G$ between $N$ and $K$, then $K$ centralizes $N$.
\end{lemma}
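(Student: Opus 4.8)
The plan is to assume the contrary and induct on $|K|$. Since $[K,N]\unlhd G$ (as $K,N\unlhd G$) and $[K,N]\le N$ (as $K$ normalizes $N$), the minimality of $N$ forces $[K,N]\in\{1,N\}$; so I suppose for contradiction that $[K,N]=N$. If $K=N$ the claim is trivial, so assume $N<K$ and choose $M\unlhd G$ with $N\le M<K$ and $K/M$ a chief factor of $G$. The chief $p$-factors of $G$ lying between $N$ and $M$ form a subcollection of those between $N$ and $K$, and they are centralized by $M$ (being centralized by $K\ge M$), so the inductive hypothesis applies to $M$ and yields $[M,N]=1$. Thus $M\le C:=C_K(N)\unlhd G$, and since $M\le C\le K$ with $K/M$ a chief factor, either $C=K$ (whence $[K,N]=1$, contradiction) or $C=M$. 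So I may assume $C_K(N)=M$, that $K/M$ acts faithfully on $N$, and that $K/M$ is elementary abelian of order a power of a single prime $\ell$.

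If $\ell=p$, pick $P\in\mathrm{Syl}_p(K)$; then $N\le P$ and $K=MP$, so using $[M,N]=1$ one gets $N=[K,N]=[P,N]$. But $N$ is a nontrivial normal subgroup of the $p$-group $P$, and in a finite $p$-group the iteration $N\ge[P,N]\ge[P,[P,N]]\ge\cdots$ terminates at $1$ by nilpotency, forcing $[P,N]<N$. This contradiction rules out $\ell=p$; note that this case uses only nilpotency, not the Frattini hypothesis, whose role is rather to make the reduction to $C_K(N)=M$ possible.

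The remaining case $\ell=q\ne p$ is the heart of the argument and the step I expect to be hardest, since it is exactly here that $N\le\Phi(G)$ must be used. Let $Q\in\mathrm{Syl}_q(K)$, so $K=MQ$ and, as $M$ centralizes $N$, $[N,Q]=[N,K]=N$; coprimality of the $q$-group $Q$ acting on the $p$-group $N$ then gives $N=C_N(Q)\times[N,Q]$, whence $C_N(Q)=1$. My intended contradiction is that $N$ would then acquire a proper supplement in $G$, which is impossible for a nontrivial $N\le\Phi(G)$. The Frattini argument gives $G=K\,N_G(Q)=M\,N_G(Q)$, and $N\cap N_G(Q)=N_N(Q)=C_N(Q)=1$ (the first equality being standard for coprime action); the goal is to upgrade $G=M\,N_G(Q)$ to $G=N\,N_G(Q)$, i.e. to show $M=N\cdot N_M(Q)$, so that $N_G(Q)$ becomes a proper complement to $N$.

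The obstacle is precisely this reduction from $M$ to $N$: because $N$ is central in $M=C_K(N)$, it need not split off inside $M$ for free, and one must genuinely feed in $N\le\Phi(G)$ — for instance via Gaschütz' splitting theorem relating complements of an abelian normal $p$-subgroup in $G$ to complements in suitable subgroups. This is consistent with the fact that the coprime case cannot be excluded by module theory alone: since $\mathbb{F}_p[Q]$ is semisimple by Maschke's theorem, the faithful coprime action on $N$ is unobstructed as a module, so the only possible obstruction to the resulting complement, and hence the only remaining role of the hypothesis, is the non-split $p$-structure of $G$ encoded by $N\le\Phi(G)$. I therefore expect the technical core (mirroring the proof of \cite[Theorem 3.2]{K}) to consist of making this Frattini/Gaschütz reduction precise.
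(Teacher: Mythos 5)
Your reduction is sound and even a bit more elementary than the paper's in places: the induction on $|K|$ correctly brings you to the situation $M=C_K(N)$ with $K/M$ a chief $\ell$-factor acting faithfully on $N$, and your disposal of the case $\ell=p$ by nilpotency of a Sylow $p$-subgroup is a valid substitute for the paper's appeal to Clifford's theorem. But the coprime case, which you rightly identify as the heart of the matter, is not completed, and the completion you sketch cannot work from the facts you have accumulated. At that point you know: $N\leqslant\Phi(G)$ is minimal normal, $M=C_K(N)$, $K/M$ is a chief $q$-factor, $G=M\,N_G(Q)$ and $C_N(Q)=1$; and you assert that the only remaining input is $N\leqslant\Phi(G)$, to be fed in via Gasch\"utz. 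This is false. Take $G=(C_4\times C_4)\rtimes C_3$ with $C_3$ acting through an order-$3$ element of $\GL(2,\mathbb{Z}/4\mathbb{Z})$, and set $N=\Phi(C_4\times C_4)\cong C_2\times C_2$, $K=G$, $M=C_4\times C_4$, $Q\in\mathrm{Syl}_3(G)$. Then $N$ is minimal normal and lies in $\Phi(G)$ (since $\Phi(M)\leqslant\Phi(G)$ for $M\unlhd G$), $M=C_G(N)$, $K/M\cong C_3$ is a chief $q$-factor with $q=3\neq 2=p$, $[M,N]=1$, $G=M\,N_G(Q)$, $C_N(Q)=1$ --- every fact on your list holds --- yet $N_G(Q)=Q$, $N_M(Q)=1$, and $M\neq N\cdot N_M(Q)$, so $N$ has no proper supplement of the kind you seek and there is no contradiction to be extracted. (This $G$ is of course not a counterexample to the lemma: $G$ fails to centralize the chief $2$-factor $M/N$. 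The point is that you have stopped using the centralization hypothesis too early.)

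The missing idea is that the hypothesis must be invoked a second time, on the chief $p$-factors of $G$ between $N$ and $M$: since $K$ centralizes these and $N\leqslant Z(M)$, all chief $p$-factors of $K/N$ are central, so $K/N$ is $p$-nilpotent (this is \cite[A. Theorem 13.8(a)]{DH}, the step your sketch omits). Hence a Hall $p'$-subgroup $Q_0$ of $K$ satisfies $NQ_0/N\,\mathrm{char}\,K/N$, so $NQ_0\unlhd G$, and the Frattini argument applied to $NQ_0$ (rather than to $K$) gives $G=N_G(Q_0)\cdot NQ_0=N_G(Q_0)\,N$ directly, with no need to pass through $M$. Only now does $N\leqslant\Phi(G)$ enter: it forces $N_G(Q_0)=G$, so $Q_0\unlhd G$, whence $[N,Q_0]\leqslant N\cap Q_0=\{1\}$ and $Q_0\leqslant C_G(N)$, which contradicts $[K,N]=N$ since $K=NQ_0\cdot(\text{a }p\text{-group centralizing }N\text{ by your }\ell=p\text{ case})$. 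In short: your Frobenius-style plan of upgrading $G=M\,N_G(Q)$ to $G=N\,N_G(Q)$ by a splitting theorem is not salvageable; the correct route replaces the Sylow $q$-subgroup $Q$ of $K$ by a Hall $p'$-subgroup $Q_0$ whose product with $N$ is normal in $G$, and that normality is exactly what the unused part of the hypothesis buys.
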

\begin{proof}
Assume by contradiction that $K$ centralizes all chief $p$-factors of $G$ between $N$ and $K$ and that $K$ does not centralize $N$. Note that a nontrivial element $x$ of $N$ cannot be central in $G$, as otherwise $\langle x\rangle$ would be a normal subgroup of $G$ properly contained in $N$. Let $C:=C_G(N)$, it does not contain $K$ but, as $N$ is abelian, it contains $N$. Let $D = K \cap C$ so that $N \leqslant D < K$. Let $E/D$ be a minimal normal subgroup of $G/D$ contained in $K/D$. Then $E \leqslant K$, hence $E \cap C = K \cap C = D$, therefore $E/D$ is $G$-isomorphic to  $CE/C$, which therefore is a minimal normal subgroup of $G/C$. The group $N$ can be seen as an $\mathbb{F}_p$-vector space. By Clifford's theorem, a finite group acting faithfully and irreducibly on a finite $\mathbb{F}_p$-vector space does not have any nontrivial normal $p$-subgroup, since such a subgroup would have to act completely reducibly, hence trivially \cite[A. Lemma 13.6]{DH}. So $G/C$ does not have any nontrivial normal $p$-subgroup. It follows that $CE/C$ and hence $E/D$ has order coprime to $p$. 
Since $K$ is solvable, it admits a Hall $p'$-subgroup $Q$ (also known as a Sylow $p$-complement), in other words $Q$ is a subgroup of $K$ such that $|Q|$ is coprime to $p$ and $|K:Q|$ is a power of $p$. Since by Hall's theorem all Hall $p'$-subgroups are conjugate, and $D,E$ are normal subgroups of $K$, it follows that $E \leqslant DQ$. 

By assumption, $K$ centralizes all chief $p$-factors of $G$ in $K/N$, so the chief $p$-factors of $K/N$ are central in $K/N$. It follows from \cite[A. Theorem 13.8(a)]{DH} that $K/N$ is $p$-nilpotent, in other words the Sylow $p$-complement $NQ/N$ of $K/N$ is normal in $K/N$. As $NQ/N$ is a normal Sylow $p$-complement of $K/N$, it follows by Hall's theorem that $NQ/N$ is the unique Sylow $p$-complement of $K/N$ and so it is characteristic in $K/N$. Since $K$ is a normal subgroup of $G$, we deduce that $NQ \unlhd G$. Let $H$ be the normalizer of $Q$ in $G$. Since $NQ \unlhd G$ and the Hall $p'$-subgroups of $NQ$ are all conjugate by Hall's theorem, if $x \in G$ then there exists $y \in NQ$ such that $Q^x = Q^y$, so that $xy^{-1} \in H$ implying that $x \in Hy \subseteq HNQ = HN$. We deduce that $HN = G$. Since $N$ is contained in the Frattini subgroup of $G$, we must have $H=G$, in other words $Q \unlhd G$. Since $|Q|$ is coprime to $p$, the commutator $[N,Q]$ is equal to $\{1\}$ hence $Q \leqslant C$. Therefore $E \leqslant DQ \leqslant C$, a contradiction, as required.
\end{proof}

Let $p$ be a prime number, $G$ a finite solvable group. Recall that $r_p(G)$ (the $p$-rank of $G$) is the maximal rank of a chief $p$-factor of $G$. In other words $r_p(G)$ is the largest nonnegative integer $k$ such that $G$ has chief factors of order $p^k$. Recall also that $S_p(G)$ denote the largest positive integer $s$ such that $G$ has a maximal subgroup of index $p^s$ if $p$ divides $|G|$, and $S_p(G)=0$ otherwise. Recall that, in a finite solvable group, every maximal subgroup has prime power index. Of course $S_p(G) = S_p(G/\Phi(G))$. The group $G$ is supersolvable if and only if $r_p(G)=1$ for every prime divisor $p$ of $|G|$. By \cite[Theorem 3.2(1)]{K}, if $S_p(G) = 1$ then $r_p(G) = 1$, implying that $G$ is supersolvable if and only if $G/\Phi(G)$ is supersolvable. However, the numbers $r_p(G)$, $r_p(G/\Phi(G))$ are not necessarily equal in general.

It is interesting to point out that, by \cite[Satz 1]{Huppert}, $r_p(G)$ is equal to $j_p(G)$, the largest integer $j$ such that $p^j$ appears as an index in a maximal chain of subgroups of $G$. For example, the group $A_4$ has two distinct maximal chain types, the indices are $2,2,3$ in one, $3,4$ in the other one, so $j_2(A_4)=2$ and the maximal index that appears in a maximal chain depends on the chain.

\begin{lemma} \label{srineq}
If $G$ is a finite solvable group and $p$ is a prime dividing $|G|$, then $1 \leqslant S_p(G) \leqslant r_p(G)$. Moreover, $r_p(G)=1$ if and only if $r_p(G/\Phi(G))=1$.
\end{lemma}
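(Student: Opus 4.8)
The plan is to treat the three assertions—the lower bound $S_p(G)\geqslant 1$, the inequality $S_p(G)\leqslant r_p(G)$, and the equivalence in the ``moreover'' clause—in turn, using the structure of solvable primitive groups together with the already-quoted fact that $S_p(G)=1$ forces $r_p(G)=1$ and the observation $S_p(G)=S_p(G/\Phi(G))$.

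First I would establish $S_p(G)\geqslant 1$, that is, that a finite solvable group $G$ with $p\mid |G|$ has a maximal subgroup of $p$-power index. I would argue by induction on $|G|$, choosing a minimal normal subgroup $N$, which is an elementary abelian $q$-group. If $p\mid |G/N|$, the inductive hypothesis yields a maximal subgroup of $G/N$ of $p$-power index whose preimage in $G$ is maximal of $p$-power index. Otherwise $p\nmid |G/N|$ forces $q=p$ and $N$ to be a normal Sylow $p$-subgroup; then a Hall $p'$-complement $K$, which exists by Hall's theorem, has index $|N|=p^k$, and minimality of $N$ together with $G=NK$ shows $K$ is maximal, so again $S_p(G)\geqslant 1$.

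Next, for $S_p(G)\leqslant r_p(G)$, I would take a maximal subgroup $M$ with $|G:M|=p^{S_p(G)}$ and pass to the primitive quotient $\overline{G}=G/M_G$, in which $\overline{M}=M/M_G$ is core-free maximal. Since $\overline{G}$ is solvable and primitive, it is monolithic by \cite{Baer}: its unique minimal normal subgroup $V$ is elementary abelian, complemented by $\overline{M}$, with $|V|=|\overline{G}:\overline{M}|=|G:M|=p^{S_p(G)}$. As $V$ is a minimal normal subgroup of the quotient $\overline{G}$ of $G$, it is a chief $p$-factor of $G$ of order $p^{S_p(G)}$, whence $r_p(G)\geqslant S_p(G)$.

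Finally, for the equivalence, I would use that chief factors of $G/\Phi(G)$ are chief factors of $G$, being minimal normal subgroups of quotients $G/H$ with $H\supseteq\Phi(G)$, so that $r_p(G/\Phi(G))\leqslant r_p(G)$; combined with the fact that $p\mid|G/\Phi(G)|$ gives $1\leqslant r_p(G/\Phi(G))$, the direction $r_p(G)=1\Rightarrow r_p(G/\Phi(G))=1$ follows. Conversely, if $r_p(G/\Phi(G))=1$ then $1\leqslant S_p(G)=S_p(G/\Phi(G))\leqslant r_p(G/\Phi(G))=1$, so $S_p(G)=1$, and \cite[Theorem 3.2(1)]{K}—the result reproved by Lemma \ref{frattinicf}—gives $r_p(G)=1$. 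I expect the main obstacle to be the step $S_p(G)\leqslant r_p(G)$, where the structure theorem for solvable primitive groups is essential in identifying the index of a maximal subgroup with the order of a genuine chief $p$-factor.
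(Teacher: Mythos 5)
Your proof is correct and follows essentially the same route as the paper: both identify $|G:M|$, for $M$ a maximal subgroup of index $p^{S_p(G)}$, with the order of a minimal normal subgroup of the primitive quotient $G/M_G$ (hence a chief $p$-factor of $G$), and both handle the ``moreover'' clause via $S_p(G)=S_p(G/\Phi(G))$ together with \cite[Theorem 3.2(1)]{K}. The only differences are cosmetic: you invoke Baer's structure theorem for solvable primitive groups where the paper verifies the complementation $MN=G$, $M\cap N=M_G$ directly, and you spell out an inductive argument for $S_p(G)\geqslant 1$ where the paper simply cites Hall's theorem.
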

\begin{proof}
Hall's theorem implies that $S_p(G) \geqslant 1$. Let $s=S_p(G)$ and let $M$ be a maximal subgroup of $G$ of index $p^s$. Let $M_G$ be the normal core of $M$ in $G$ and let $N/M_G$ be a minimal normal subgroup of $G/M_G$. Since every normal subgroup of $G$ contained in $M$ is contained in $M_G$, we have that $M$ does not contain $N$, hence $MN=G$. Since $N/M_G$ is abelian, $(M \cap N)/M_G$ is normal in $N/M_G$, and it is of course normal in $M/M_G$ as well, therefore $M \cap N \unlhd MN = G$. By definition of $N$, we deduce that $M \cap N = M_G$, therefore $p^s = |G:M| = |N/M_G| \leqslant p^r$ where $r=r_p(G)$, since $N/M_G$ is a chief factor of $G$. Therefore $s \leqslant r$.

Now, if $r_p(G)=1$ then of course $r_p(G/\Phi(G))=1$ since the chief factors of $G/\Phi(G)$ are, in particular, chief factors of $G$. Conversely, if $r_p(G/\Phi(G))=1$, then $S_p(G) = S_p(G/\Phi(G)) \leqslant r_p(G/\Phi(G))=1$ implies that $S_p(G)=1$ hence $r_p(G)=1$ by \cite[Theorem 3.2(1)]{K}.
\end{proof}

We can now prove Theorem \ref{thmprank}.

\begin{proof}[Proof of Theorem \ref{thmprank}]
Note that if a group $A$ is a subdirect product of a family of groups $G_i$, $i\in I$, then every chief factor of $A$ is isomorphic to a chief factor of one of the $G_i$, therefore $r_p(A)=\mathrm{max}\{r_p(G_i): i \in I\}$. We will use this fact as well as the one that $S_p(G)=S_p(G/\Phi(G))$. Using Theorem \ref{main1}, Lemma \ref{srineq}, the fact that if $S_p(G)=1$ then $r_p(G)=1$, and the fact that $G$ is supersolvable if and only if $G/\Phi(G)$ is supersolvable, it is now easy to prove item (1) and the fact that item (4) follows from items (1), (2), (3).
For item (2), Theorem 3.1 and Lemma 5.3 give
$$S_p(G)=S_p(G/\Phi(G))\leqslant r_p(G/\Phi(G))\leqslant 2$$
since $G/\Phi(G)$ is a subdirect product of finite primitive MP groups. If $S_p(G)=1$ then $r_p(G)=1$, so $r_p(G/N)\leqslant 1$ for any $N\unlhd G$, and so $G$ has no quotient isomorphic to $A_4$ nor $M(9)$. Noting that $r_p(G/\Phi(G))=1$ if and only if $G/\Phi(G)$ has no quotient isomorphic to $A_4$ if $p=2$, $M(9)$ if $p=3$ yields item (2). 

We now consider item (3). Assume that $p \in \{2,3\}$. We prove that $r_p(G) \leqslant 2$ by induction on $|G|$. Let $K$ be the intersection of the normal subgroups $J$ of $G$ such that $G/J$ is isomorphic to an irreducible subgroup of $\GL(1,p)$ or $\GL(2,p)$. Let $I_{(2,p)}$ be the set of irreducible subgroups of $\mathrm{GL}(2,p)$ that are MP. Then
$$I_{(2,p)}=\left\{\begin{array}{ll} 
\{C_3, S_3\} & \textrm{if} \ p=2\\

\{C_4, D_8, Q_8, QD_{16}\} & \textrm{if} \ p=3. 
\end{array}
\right.
$$
Since the class of finite supersolvable groups form a pseudovariety of finite groups, it follows that $G/K$ is a supersolvable group of exponent dividing $6$ if $p=2$, and a 2-group of exponent dividing $8$ if $p=3$. If $K=\{1\}$ then $G$ is supersolvable hence $r_p(G)=1$. Now assume $K \neq \{1\}$. Let $N$ be a minimal normal subgroup of $G$ contained in $K$. Write $|N| = q^s$ for some prime $q$ and some positive integer $s$, so that $N \cong \mathbb{F}_q^s$. As a quotient of a finite MP group, $G/N$ is a finite MP group, so by induction $r_p(G/N) \leqslant 2$. Hence $r_p(G) \leqslant 2$ unless $q=p$ and $s>2$.

We therefore assume that $|N|=p^s$ where $s>2$. Let $C=C_G(N)$. As $N$ is not cyclic, note that $C\neq G$, otherwise every normal subgroup of $N$ is normal in $G$ contradicting the minimality of $N$. We now show that $C$ does not contain $K$. Suppose for a contradiction that $C$ contains $K$. Then $G/C$ is a factor group of $G/K$, and so $G/C$ is a supersolvable group of exponent dividing $6$ if $p=2$, otherwise $G/C$ is a 2-group. If $N \setminus \{1\}$ is a unique conjugacy class in $G$, then for $1 \neq x \in N$ we have $|G:C_G(x)|=|N|-1=p^s-1$. But since $C_G(x) \geqslant C \geqslant K$, it follows that either $p=2$ and $p^s-1$ is a power of $3$ or $p=3$ and $p^s-1$ is a power of $2$. Lemma \ref{l:power23} now implies that $s\leqslant 2$, a contradiction. This proves that $N$ contains at least two nontrivial conjugacy classes of $G$. We now claim that there is a conjugacy class in $G$ of nontrivial elements of $N$, say with representative $x$, such that $x$ is conjugate to $x^{-1}$. This is clear if $p=2$ since in this case $N$ is an elementary abelian $2$-group so every element of $N$ equals its own inverse. Suppose now that $p=3$. The nontrivial $2$-group $G/C \leqslant \GL(s,3)$ has involutions, and any involution of $\GL(s,3)$ has minimal polynomial dividing $X^2-1$, so it has $-1$ as an eigenvalue. A corresponding eigenvector $x \in N$ is therefore conjugate in $G$ to $x^{-1}$.
We have now established the above claim.
We also know that there is at least a further conjugacy class in $G$ of nontrivial elements of $N$, say with representative $y$. Since $N$ is a minimal normal subgroup of $G$,  $\langle x^G\rangle = \langle y^G\rangle = N$. However $y$ is not conjugate in $G$ to $x$ nor $x^{-1}$, contradicting the fact that $G$ is MP. This contradiction establishes the fact that $C$ does not contain $K$.

We are in the following situation: the normal subgroup $K$ of $G$ contains $N$ and centralizes all chief $p$-factors of $G$ above $N$ by definition of $K$, since $r_p(G/N) \leqslant 2$, and $K$ does not centralize $N$. By Lemma \ref{frattinicf}, $N$ is not contained in the Frattini subgroup of $G$, therefore there exists a maximal subgroup $H$ of $G$ such that $NH=G$. The intersection $N \cap H$ is normal in $N$ since $N$ is abelian and it is normal in $H$ since $N$ is normal in $G$, therefore $N \cap H = \{1\}$ by minimality of $N$. It follows that $|G:H| = |N| = p^s$ and this contradicts the fact that $S_p(G) \leqslant 2$.\\

We finally consider item (5). By items (1)-(3), $r_p(G)$ and $S_p(G)$ belong to $\{1,2\}$. If $r_p(G)=1$ then $S_p(G)=1$ by Lemma \ref{srineq}. Suppose $r_p(G)=2$ and assume for a contradiction that $S_p(G)=1$. Then by items (1) and (2), $p\in \{2,3\}$ and $G$ has no quotients isomorphic to $A_4$ if $p=2$, $M(9)$ if $p=3$. Since 
$$\{H \ \textrm{finite group}: r_p(H)\leqslant 1\}$$
is a pseudovariety of finite groups, 
it follows from Theorem \ref{main1} that $r_p(G/\Phi(G))=1$. Item (4) now yields $r_p(G)=1$, a contradiction. This establishes item (5).
\end{proof}

\begin{proof}[Proof of Corollary \ref{c:orderchief}]
If $M$ is a chief factor of $G$ then $U = M \rtimes G/C_G(M)$ is a primitive solvable group with socle $M$ such that $U/M$ is MP and any two nontrivial elements of $M$ are either conjugate or inverse-conjugate in $U$. However, note that $U$ is not necessarily isomorphic to a quotient of $G$. If $U$ is MP, then it is one of the groups in Theorem \ref{main1} and the result follows in this case. If $U$ is not MP then, by Theorem \ref{main1bis}, $U$ is one of the following groups: $S_4$, $\AGammaL(1,8)$, $\AGammaL(1,9)$, $\AGammaL(1,16)$, so that $|M| \in \{4,8,9,16\}$. Now, Theorem \ref{thmprank} implies that $8$ and $16$ are not orders of chief factors of a finite MP group, hence the result follows.
\end{proof}


\begin{thebibliography}{99}

\bibitem{Baer} R. Baer. Classes of finite groups and their properties. Illinois Journal of  Mathematics \textbf{1} (1957), 115--187.

\bibitem{Cameron} P. J. Cameron. Permutation Groups. London Mathematical Society, Student Texts \textbf{45}. Cambridge University Press, Cambridge, 1999. x+220 pp.

\bibitem{DL} E. Detomi, A. Lucchini. Crowns and factorization of the probabilistic zeta function of a finite group. Journal of Algebra \textbf{265} (2003), 651--668.

\bibitem{DH} K. Doerk, T. Hawkes. Finite Soluble Groups. De Gruyter Expositions in Mathematics \textbf{4}. Walter de Gruyter and Co., Berlin, 1992. xiv+891 pp.

\bibitem{F} D. A. Foulser. Solvable primitive permutation groups of low rank. Transactions of the American Mathematical Society \textbf{143} (1969), 1--54.

\bibitem{FK} D. A. Foulser, M. J. Kallaher. Solvable, flag-transitive, rank 3 collineation groups. Geometriae Dedicata \textbf{7} (1978), 111--130.

\bibitem{Huppert} B. Huppert. Normalteiler und maximale Untergruppen endlicher Gruppen. Mathematische Zeitschrift \textbf{60} (1954), 409--434.

\bibitem{KK} B. Klopsh, B. Kuckuck. The Magnus property for direct products. Archiv der Mathematik \textbf{107} (2016), 379--388.

\bibitem{KMP} B. Klopsch, L. Mendon\c{c}a, J. M. Petschick. Free polynilpotent groups and the Magnus property. Forum Mathematicum \textbf{35} (2023), 573–590.

\bibitem{K} J. Kohler. Finite groups with all maximal subgroups of prime or prime square index. Canadian Journal of Mathematics \textbf{16} (1964), 435--442.

\bibitem{L} M. W. Liebeck. The affine permutation groups of rank three. Proceedings of the London Mathematical Society \textbf{54} (1987), 477--516.

\bibitem{Magnus30} W. Magnus.  \"{U}ber diskontinuierliche Gruppen mit einer definierenden Relation. (Der Freiheitssatz).
Journal f\"{u}r die Reine und Angewandte Mathematik. [Crelle's Journal] \textbf{163} (1930), 141--165.

\bibitem{Taylor} D. E. Taylor. The geometry of the classical groups. Sigma Series in Pure Mathematics, 9. Heldermann Verlag, Berlin, 1992.

\end{thebibliography}
\end{document}